\newtheorem{theorem}{Theorem}[section]
\newtheorem{lemma}[theorem]{Lemma}
\newtheorem{proposition}[theorem]{Proposition}
\newenvironment{proof}[1][Proof]{\noindent\textbf{#1.} }{\ \rule{0.5em}{0.5em}}
\begin{document}
\title{Geodesic orbit Finsler metrics on  Euclidean spaces}
\author{Ming Xu${}^1$\thanks{Supported by NSFC (11821101, 11771331) and Beijing Natural Science Foundation
(No. 1182006).},
Shaoqiang Deng${}^2$\thanks{Supported by NSFC (11671212,  51535008) and the Fundamental Research Funds for the Central Universities. }
 and Zaili Yan${}^3$\thanks{Corresponding Author. Supported by  NSFC (11626134, 11701300) and K.C. Wong Magna Fund in Ningbo University. }
\\ \\
${}^1$School of Mathematical Sciences\\
Capital Normal University\\
Beijing 100048, P. R. China\\
Email:mgmgmgxu@163.com
\\ \\
${}^2$School of Mathematical Sciences and LPMC\\
Nankai University\\
Tianjin 300071, P. R. China\\
Email:dengsq@nankai.edu.cn
\\ \\
${}^3$Department of Mathematics\\
Ningbo University\\
Ningbo, Zhejiang 315211, P. R. China\\
Email:yanzaili@nbu.edu.cn
}
\date{}
\maketitle
\begin{abstract}
A Finsler space $(M,F)$  is called a geodesic orbit space if any geodesic of constant speed is the orbit of a one-parameter subgroup of isometries of $(M, F)$. In this paper, we study Finsler metrics on Euclidean spaces which are geodesic orbit metrics.  We will show
that, in this case $(M, F)$ is a fiber bundle over a symmetric Finsler space $M_1$ of non-compact type such that each fiber $M_2$ is a totally geodesic nilmanifold with a step-size at most 2, and the projection $\pi:M\rightarrow M_1$ is a Finslerian submersion.
Furthermore,  when $M_1$ has no Hermitian symmetric factors, the fiber bundle description for $M$ can be strengthened to $M=M_1\times M_2$ as coset spaces, such that each product factor is totally geodesic in $(M,F)$ and is a geodesic orbit Finsler space itself. Finally,  we use the techniques in this paper to discuss the interaction between the geodesic orbit spaces and the negative  (non-positive) curved conditions, and provide new proofs for some of our previous results.

{\bf Key words:} geodesic orbit metric, homogeneous Finsler space, Levi decomposition, Finslerian submersion

{\bf Mathematics Subject Classification (2010):}
22E46, 53C22, 53C60
\end{abstract}

\section{Introduction}
A homogeneous Riemannian or Finsler manifold is called
a {\it geodesic orbit space}, if any geodesic is the orbit
of a one-parameter subgroup of isometries.
The notion of a geodesic orbit space was introduced in Riemannian geometry by O. Kowalski and L. Vanhecke in 1991 \cite{KV1991}, which is a generalization of the naturally reductive homogeneity. Geodesic orbit Riemannian manifolds
have been studied rather extensitively; see for example \cite{AA2007, AN2009, AV1999, BN2018, DKN, Go1996, GN2018}. Meanwhile, Geodesic orbit Finsler spaces and
their subclasses, for example, normal homogeneous and
$\delta$-homogeneous Finsler spaces,
were also studied in recent years; See
\cite{Xu2018-preprint, XD2017, XD2017-2, XZ2018, YD2014}.

In this paper, we study  geodesic orbit Finsler spaces $(M,F)$ in the case
 that $M$ is diffeomorphic to a Euclidean space. Our main theorem is the following

\begin{theorem}\label{main-thm-1}
Let $(M,F)$ be a geodesic orbit Finsler space which is diffeomorphic to a Euclidean space. Then $M$ is the total
space of a fiber bundle $\pi:M\rightarrow M_1$ such that $\pi$
is a Finslerian submersion, $M_1$ is a symmetric  Finsler space of non-compact type, and each fiber  is a geodesic orbit nilmanifold with  step-size at
most $2$. Furthermore, each fibre is  totally geodesic
in $(M,F)$.
\end{theorem}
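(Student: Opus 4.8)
The plan is to reduce the geometric statement to Lie theory, by fixing a transitive presentation $M=G/H$ with $G$ the identity component of the isometry group $I(M,F)$ and $H=G_o$ the (compact) isotropy at a base point $o$, and then to read off the bundle structure from the Levi decomposition of $\mathfrak{g}=\mathrm{Lie}(G)$. First I would use contractibility: since $M\cong\mathbb{R}^n$ is contractible and a maximal compact $K\supseteq H$ gives a fibration $K/H\hookrightarrow G/H\to G/K$ with $G/K$ contractible (Cartan–Iwasawa), the long exact homotopy sequence forces the compact fiber $K/H$ to be a point, so $H$ is a maximal compact subgroup of $G$. I would then take the Levi–Malcev splitting $\mathfrak{g}=\mathfrak{s}\ltimes\mathfrak{r}$, with $\mathfrak{s}$ a semisimple Levi factor and $\mathfrak{r}$ the radical, and write $\mathfrak{s}=\mathfrak{s}_c\oplus\mathfrak{s}_{nc}$ (compact and noncompact ideals). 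Because every compact subgroup is conjugate into the maximal compact $H$, after adjusting the choice of Levi factor and base point one can arrange $\mathfrak{s}_c\subseteq\mathfrak{h}$, so the compact semisimple factors only contribute to the isotropy representation; the effective semisimple part is $\mathfrak{s}_{nc}$, with Cartan decomposition $\mathfrak{s}_{nc}=\mathfrak{k}_{nc}\oplus\mathfrak{p}_{nc}$ where $\mathfrak{k}_{nc}=\mathfrak{s}_{nc}\cap\mathfrak{h}$.

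The analytic engine is the Finsler geodesic vector criterion: $(M,F)$ is geodesic orbit iff for every $v\in\mathfrak{m}$ there is $A\in\mathfrak{h}$ with $g_v(v,[A+v,Y]_{\mathfrak{m}})=0$ for all $Y\in\mathfrak{m}$, where $g_v$ is the fundamental inner product of $F$ at $v$ and subscript $\mathfrak{m}$ denotes projection along the reductive decomposition $\mathfrak{g}=\mathfrak{h}\oplus\mathfrak{m}$. Setting $R$ to be the radical subgroup and $M_1:=G/(R\cdot H)$, the natural map $\pi:M=G/H\to M_1$ is a fiber bundle whose fibers are the $R$-orbits, and $M_1$ is acted on transitively by the semisimple quotient $G/R$ with maximal compact isotropy, so $M_1\cong S_{nc}/K_{nc}$. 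Applying the geodesic vector criterion to $v\in\mathfrak{p}_{nc}$, together with the $\mathrm{Ad}(K_{nc})$-invariance of the restricted metric, I would show that the $S_{nc}$-orbit through $o$ is totally geodesic and inherits a symmetric Finsler metric of non-compact type; this simultaneously identifies the base $M_1$ and shows that the induced fundamental tensors on horizontal vectors match those downstairs, so that $\pi$ is a Finslerian submersion.

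It remains to treat the fibers. These are the orbits of the solvable radical $R$; applying the geodesic vector criterion to vectors in the radical directions should first show that the induced homogeneous Finsler structure on a fiber is itself a geodesic orbit space, and that the non-nilpotent (abelian) part of $\mathfrak{r}$ either centralizes the fiber directions or is absorbed into the symmetric base, so that each fiber is in fact a nilmanifold. The Finsler analog of Gordon's theorem on geodesic orbit nilmanifolds then bounds its step size by $2$, and the same bracket relations give total geodesy of the fibers in $(M,F)$.

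The main obstacle is precisely the coupling between these two parts. One must control the mixed brackets $[\mathfrak{s}_{nc},\mathfrak{r}]$ and the abelian quotient $\mathfrak{r}/\mathfrak{n}$ of the radical, using \emph{only} the algebraic relations supplied by the geodesic vector criterion, in order to prove simultaneously that (i) the noncompact semisimple orbit detaches as a totally geodesic symmetric factor of non-compact type, and (ii) the transverse radical directions reorganize into a two-step nilpotent, totally geodesic fiber compatible with the submersion metric. Ensuring that these mixed terms obstruct neither the symmetric structure of $M_1$ nor the total geodesy and nilpotency of the fibers is the technically delicate heart of the argument, and is where I expect the bulk of the computation with $g_v$ and the bracket identities to be concentrated.
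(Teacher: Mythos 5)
Your outline follows the same skeleton as the paper's proof --- maximal compact isotropy from contractibility, Levi decomposition with $\mathfrak{s}_c\subset\mathfrak{h}$, the radical orbits as fibers, the quotient as symmetric base, and the Finsler analogue of Gordon's nilmanifold theorem for the step bound --- but the assertions you delegate to ``the geodesic vector criterion'' are precisely the content of the theorem, and two of them are false as stated. First, the identity $\mathfrak{k}_{nc}=\mathfrak{s}_{nc}\cap\mathfrak{h}$ cannot be arranged: in Gordon's example (the universal cover of $SL(2,\mathbb{R})$ with a left-invariant geodesic orbit metric, so that $\mathfrak{g}=sl(2,\mathbb{R})\oplus\mathbb{R}$), the isotropy algebra $\mathfrak{h}\cong\mathbb{R}$ is diagonally embedded, hence $\mathfrak{s}_{nc}\cap\mathfrak{h}=0$ while $\mathfrak{k}_{nc}=so(2)$. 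What is true --- and is Lemma \ref{key-lemma-3} of the paper --- is that the \emph{projection} $\mathfrak{h}_{nc}$ of $\mathfrak{h}$ into $\mathfrak{s}_{nc}$ equals $\mathfrak{k}_{nc}$; and this is not a consequence of $H$ being maximal compact, since for a generic (non geodesic orbit) left-invariant metric on the universal cover of $SL(2,\mathbb{R})$ the full connected isometry group is that group itself, $H$ is trivial and still maximal compact, yet $\mathfrak{h}_{nc}=0\neq\mathfrak{k}_{nc}$. The paper's proof of Lemma \ref{key-lemma-3} needs the geodesic orbit hypothesis through Lemma \ref{key-lemma-1} ($[\mathfrak{s}_{nc},\mathfrak{r}]=0$, itself resting on Theorem \ref{theorem-3-2}, Lemma \ref{GO-Abelian-ideal-in-m-KVFCL} and Theorem \ref{go-submersion-thm}) together with the maximality $G=I_0(M,F)$: if $\mathfrak{h}_{nc}\neq\mathfrak{k}_{nc}$, the complement generates extra right-translation isometries (Lemma \ref{lemma-go-centralizer}), which forces a nonzero ideal of $\mathfrak{g}$ inside $\mathfrak{h}$ and contradicts effectiveness. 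You flag the mixed brackets $[\mathfrak{s}_{nc},\mathfrak{r}]$ as ``the technically delicate heart,'' but flagging is not a mechanism; the mechanism is these lemmas.

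Second, your plan to identify the base by showing ``the $S_{nc}$-orbit through $o$ is totally geodesic and symmetric'' breaks down in the same example: there $S_{nc}$ already acts transitively, so $S_{nc}\cdot o=M$ is three-dimensional and not symmetric, while the actual base of the fibration is the hyperbolic plane, which does not occur as an orbit in $M$ at all. This is exactly the Hermitian-factor obstruction, and it is why the paper constructs the base metric \emph{downstairs}, as the quotient metric of Theorem \ref{go-submersion-thm} and Lemma \ref{quotient-metric-lemma}, rather than by restricting $F$ to a submanifold (and why the product splitting of Theorem \ref{main-thm-2} requires $\mathfrak{k}_{nc}$ semisimple). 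Third, the fibers are orbits of the solvable, generally non-nilpotent radical, so saying the abelian part of $\mathfrak{r}$ is ``centralized or absorbed'' does not make them nilmanifolds: the paper first identifies the fiber $R\cdot o$ with the orbit of the centralizer $S_cK''_{nc}R$ of $K_{nc}$ via a dimension count that again uses Lemma \ref{key-lemma-3} (this is also what yields total geodesy, through Lemma \ref{totally-geodesic-technique-lemma-for-go}), and then exhibits a nilpotent group acting simply transitively on the fiber by adjoining right translations by a torus in the centralizer (Lemmas \ref{lemma-go-centralizer} and \ref{key-lemma-2}). Without these three ingredients your proposal is a correct table of contents for the paper's argument, not a proof.
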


Theorem \ref{main-thm-1} is a generalization of the description for geodesic orbit Riemannian metrics on Euclidean spaces in \cite{GN2018}. The proof of Theorem \ref{main-thm-1} applies a different theme, mainly originated from the submersion and
totally geodesic techniques. The interaction between the geodesic orbit condition and a Levi decomposition provides the most crucial key lemmas (see Lemma \ref{key-lemma-1}, Lemma \ref{key-lemma-2} and Lemma \ref{key-lemma-3}).

Now we give some remarks on some  special cases of $(M,F)$ in Theorem \ref{main-thm-1}.

First, the Finsler space $M_1$ in Theorem \ref{main-thm-1} is a symmetric space of non-compact type.
If $M_1$ has no symmetric Hermitian factors, then as a coset space, $M$ can be presented
as the product $M=M_1\times M_2$, in which each product
factor is totally geodesic in $(M,F)$ and geodesic orbit itself
(see Theorem \ref{main-thm-2} for the precise statement).

If $M_1$ is a single point, then $(M,F)$ is
a geodesic orbit Finsler nilmanifold.
Theorem \ref{2-step-nil-go-thm} in Section \ref{section 4}, which asserts that the step-size is at most $2$, was first given as Theorem 5.2 in \cite{YD2014}. Here we apply a different approach, which fixes the gap in the proof in \cite{YD2014}.

If the fiber $M_2$ in Theorem is a single  point, then $(M,F)$ is
a symmetric Finsler space of non-compact type,
which has non-positive flag curvature and negative Ricci scalar.
Furthermore, if $M_1$ is of rank one, then $(M,F)$ has negative flag curvature.
The inverse statements are also true, and the rigidity of the metric $F$ can be implied by the interaction between the geodesic orbit and curved conditions. See Theorem \ref{main-cor-1}
and Theorem \ref{main-cor-2}, which first appear in \cite{XD2017}.
We use the techniques in this paper to give new proofs for the above two theorems.

This paper is organized as follows. In Section 2, we summarize some fundamental facts on general Finsler geometry and homogeneous Finsler geometry which will be used in later discussion. In Section 3, we review the definition of a geodesic
orbit Finsler space and give some fundamental results on such spaces. In Section 4, we prove the nilradical of $G$ has a step-size at most 2 when $G/H$
admits a $G$-geodesic orbit Finsler metric, as the corollary, we prove the step-size for a geodesic orbit Finsler nilmanifold is at most 2. In Section 5, we discuss the interaction between the geodesic orbit condition and a Levi decomposition and prove several key lemmas. In Section 6, we prove Theorem \ref{main-thm-1} and provide further discussions for the special cases mentioned above.

{\bf Acknowledgement}. The authors would like to express their sincere thanks to Professors Yuri G. Nikonorov and C. Gordon for helpful discussions and
valuable suggestions.

\section{Preliminaries}
\subsection{Finsler metric and Minkowski norm}

In this section, we give some fundamental facts about Finsler spaces. Throughout this paper, manifolds are always assumed to be connected and smooth.

A {\it Finsler metric} on an $n$-dimensional manifold $M$
is a continuous function $F:TM\rightarrow [0,+\infty)$, which satisfies
the following conditions \cite{CS2005}:
\begin{description}
\item{\rm (1)}  The restriction of $F$ to the slit tangent bundle $TM\backslash
0$ is a positive smooth function.
\item{\rm (2)}  For any $\lambda\geq 0$,
$F(x,\lambda y)=\lambda F(x,y)$.
\item{\rm (3)} For any {\it standard local coordinates} $x=(x^i)\in M$
and $y=y^j\partial_{x^j}\in T_xM$, the Hessian matrix
$$(g_{ij}(x,y))=\left(\frac12[F^2(x,y)]_{y^iy^j}\right)$$
is positive definite.
\end{description}

We will call $(M,F)$ a {\it Finsler manifold} or a {\it Finsler space}.
The restriction of $F$ to a tangent space $T_xM$, $x\in M$,  is called a {\it Minkowski norm}. More generally, a
Minkowski norm can be defined on any real linear space; See \cite{BCS} for details.

Given  a  nonzero vector $y$  in $T_xM$, the Hessian matrix $(g_{ij}(x,y))$ defines an inner product $\langle \cdot,\cdot\rangle_y^F$,
such that for any $u=u^i\partial_{x^i}$
and $v=v^j\partial_{x^j}$ in $T_xM$,
$$\langle u,v\rangle_y^F=\frac{1}{2}
\frac{\partial^2}{\partial r\partial s}F^2(y+ru+sv)|_{r=s=0}
=u^iv^j g_{ij}(x,y).$$
For a Minkowski norm on a real vector space, the Hessian matrix $(g_{ij}(y))$ defines the inner product $\langle\cdot,\cdot\rangle_y^F$ similarly.

\subsection{Homogeneous Finsler space}
A Finsler space $(M,F)$
is said to be {\it homogeneous} if its connected isometry group $I_0(M,F)$ acts transitively. For any
closed subgroup $G\subset I_0(M,F)$ which acts transitively
on $M$, we can present $M$ as $M=G/H$, where $H$ is the isotropy subgroup  at $o=eH$, and  is a compact subgroup of $G$. Denote $\mathrm{Lie}(H)=\mathfrak{h}$.
Then an $\mathrm{Ad}(H)$-invariant linear decomposition $\mathfrak{g}=\mathfrak{h}+\mathfrak{m}$ is called a {\it reductive decomposition}.
The subspace
$\mathfrak{m}$ can be $H$-equivalently identified with the tangent space
$T_o(G/H)$, and a $G$-invariant metric $F$ on $G/H$ is completely determined by its restriction in $T_o(G/H)$, i.e.,  an $\mathrm{Ad}(H)$-invariant Minkowski norm on $\mathfrak{m}$
\cite{De2012}.

\subsection{Geodesic and geodesic spray}
On a Finsler space $(M,F)$, a smooth curve $c=c(t)$ is called a {\it
geodesic} if the curve  $(c(t),\dot{c}(t))$ on $TM$
is the integration curve of the {\it geodesic spray} vector field $\mathbf{G}$ on $TM\backslash 0$.  With respect to
a standard local coordinate system  $x=(x^i)\in M$ and $y=y^j\partial_{x^j}\in T_xM$, the geodesic spray can be expressed as
$$\mathbf{G}=y^i\partial_{x^i}-2\mathbf{G}^i\partial_{y^i},$$
where $\mathbf{G}^{i}=\frac{1}{4}g^{il}([F^2]_{x^ky^l}y^k-[F^2]_{x^l})$.
 The equations defining the geodesic $c=c(t)$ are
$$\ddot{c}^i(t)+\mathbf{G}^i(c(t),\dot{c}(t))=0,\quad\forall i.$$

Notice that the notion of geodesic here implies that $F(\dot{c}(t))\equiv\mathrm{const}>0$ \cite{BCS}, that is,
in this paper, we will only consider  geodesics of constant speed.

For a homogeneous Finsler space $(G/H,F)$, the geodesic spray
$\mathbf{G}(x,y)$ is completely  determined by its value at $o=eH$,
as indicated  by the following proposition.

\begin{proposition}\label{homo-geod-spray-thm}
Let $(G/H,F)$ be a homogeneous Finsler space associated
with a reductive decomposition $\mathfrak{g}=\mathfrak{h}+\mathfrak{m}$, and $\{u_1,\ldots,u_n\}$ be a basis of $\mathfrak{m}$ with
$[u_i,u_j]_\mathfrak{m}=c_{ij}^k u_k$. Then for $y=y^i u_i\in T_o(G/H)=\mathfrak{m}$,  we have
\begin{equation}\label{010}
\mathbf{G}(o,y)=\tilde{y}-g^{il}c_{lj}^k [F^2]_{y^k}y^j\partial_{y^i}.
\end{equation}
\end{proposition}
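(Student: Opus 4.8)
The plan is to compute, at the point $o$, the two spatial derivatives $[F^2]_{x^l}$ and $[F^2]_{x^ky^l}$ that enter the general spray coefficient $\mathbf{G}^i=\frac14 g^{il}([F^2]_{x^ky^l}y^k-[F^2]_{x^l})$, by exploiting the $G$-invariance of $F$. I would fix the coordinate system adapted to the reductive decomposition, namely $\phi(x^1,\dots,x^n)=\exp(x^ju_j)\cdot o$, so that $\partial_{x^i}|_o=u_i$ and the tautological part of the spray is exactly $\tilde y=y^i\partial_{x^i}$. In these coordinates the entire content of the formula lies in the $1$-jet of the metric at $o$, which $G$-invariance will express through the structure constants.

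The mechanism is the following. Each $u_i$ generates a one-parameter group of isometries of $(M,F)$, hence a Killing field $X_i$ whose complete lift to $TM$ reads $X_i^c=\xi^a_{(i)}\partial_{x^a}+y^b(\partial_{x^b}\xi^a_{(i)})\partial_{y^a}$, where $\xi^a_{(i)}$ are the coordinate components of $X_i$. Since isometries preserve $F$, and hence $F^2$ as a function on $TM$, we have $X_i^c(F^2)=0$, that is
\[ \xi^a_{(i)}[F^2]_{x^a}+y^b(\partial_{x^b}\xi^a_{(i)})[F^2]_{y^a}=0 . \]
Evaluating at $o$, where $\xi^a_{(i)}(o)=\delta^a_i$, solves for $[F^2]_{x^i}(o,y)$ in terms of the $1$-jet $A^a_{ib}:=\partial_{x^b}\xi^a_{(i)}(o)$ and the fibre derivatives $[F^2]_{y^a}(o,y)$. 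Differentiating the resulting identity in $y$ and using $[F^2]_{y^ay^b}=2g_{ab}$ then yields $[F^2]_{x^ky^l}(o,y)$ as well, so that both ingredients of $\mathbf{G}^i$ are reduced to the numbers $A^a_{ib}$.

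The key step is therefore to identify the $1$-jet $A^a_{ib}$ of the fundamental vector fields in the adapted coordinates with the $\mathfrak{m}$-structure constants $c^a_{ib}$. Here I would compute $X_i(\phi(x))=\frac{d}{dt}\big|_{t=0}\exp(tu_i)\exp(x)\cdot o$ to first order using the Baker--Campbell--Hausdorff formula together with the reductive splitting $\mathfrak{g}=\mathfrak{h}+\mathfrak{m}$: the displacement of the base point is governed by $[u_i,x]_{\mathfrak{m}}$, while the $\mathfrak{h}$-component only rotates the frame and drops out of the base-point $1$-jet. Substituting $A^a_{ib}\sim c^a_{ib}$ into the two derivatives above and contracting, the terms symmetric in the summed indices vanish because $c^a_{ib}$ is antisymmetric in $(i,b)$ while being contracted against $y^iy^b$, leaving precisely a multiple of $g^{il}c_{lj}^k[F^2]_{y^k}y^j\partial_{y^i}$.

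The main obstacle is twofold. First, the honest computation of the $1$-jet $A^a_{ib}$ on $G/H$ with nontrivial isotropy: one must track both the twisting of the exponential coordinate frame (the $d\exp$ correction) and the projection of the brackets onto $\mathfrak{m}$, in order to confirm that only $[\cdot,\cdot]_{\mathfrak{m}}$ survives. Second, and more mundanely, the bookkeeping of the numerical constants: one has to reconcile the normalization of the spray $\mathbf{G}=y^i\partial_{x^i}-2\mathbf{G}^i\partial_{y^i}$ with the constants produced by the Killing identity and the homogeneity relations for $F^2$, in order to land on the stated coefficient.
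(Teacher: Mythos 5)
Your strategy is sound, and it is genuinely different from what the paper does: the paper offers no proof at all, declaring the proposition a reformulation of Theorem 3.1 in \cite{XD2014} (a Killing-frame identity) and omitting the argument. Your route --- exponential coordinates adapted to $\mathfrak{m}$, the identity $X_i^c(F^2)=0$ for the complete lifts of the fundamental Killing fields, and a BCH/$d\exp$ computation of the $1$-jets $A^a_{ib}=\partial_{x^b}\xi^a_{(i)}(o)$ --- is a self-contained version of the same mechanism, and the ingredients you list do suffice: reductivity ($[\mathfrak{h},\mathfrak{m}]\subset\mathfrak{m}$) is exactly what makes the $\mathfrak{h}$-components enter only at second order, and one finds $\xi^a_{(i)}(x)=\delta^a_i+\frac12 c^a_{ij}x^j+O(|x|^2)$, i.e. $A^a_{ib}=\frac12 c^a_{ib}$.

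Two warnings. First, $\tilde y=y^i\partial_{x^i}$ is not a property of the coordinate choice, as your opening paragraph suggests: by definition $\tilde y$ is the value at $(o,y)$ of the complete lift of the Killing field $Y=y^iX_i$, so it carries the vertical part $y^iy^bA^a_{ib}\partial_{y^a}$, and this vanishes only because $A^a_{ib}$ is antisymmetric in $(i,b)$; thus the identification you assert at the outset is a consequence of the $1$-jet computation you postpone to the end, and the logical order should be reversed. Second, the ``bookkeeping of numerical constants'' you flag is a real issue, but it cuts against the printed statement rather than against your method: with $A^a_{ib}=\frac12 c^a_{ib}$, the Killing identity gives at $(o,y)$ the values $[F^2]_{x^l}=-\frac12 c^a_{lb}y^b[F^2]_{y^a}$ and $[F^2]_{x^ky^l}y^k=\frac12 c^a_{lk}y^k[F^2]_{y^a}$ (the term $c^a_{kb}y^ky^bg_{al}$ dying by antisymmetry), hence $2\mathbf{G}^i(o,y)=\frac12 g^{il}c^k_{lj}[F^2]_{y^k}y^j=g^{il}c^k_{lj}g_{km}y^my^j$, so the vertical coefficient you land on is \emph{half} of the one printed in (\ref{010}). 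This factor of $2$ is an inconsistency of the paper itself: immediately after the proposition it equates $g^{il}c_{lj}^k g_{km}y^my^j$ with $g^{il}c_{lj}^k[F^2]_{y^k}y^j$, although $[F^2]_{y^k}=2g_{km}y^m$. Carried to completion, your argument proves the corrected formula $\mathbf{G}(o,y)=\tilde y-g^{il}c^k_{lj}g_{km}y^my^j\partial_{y^i}=\tilde y-\eta^i\partial_{y^i}$, with $\eta$ normalized as in \cite{Hu2015}, which is the version the rest of the paper actually uses.
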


The vector $\tilde{y}\in T_{(o,y)}(T(G/H))$ in Proposition \ref{homo-geod-spray-thm} is defined as the following. Any $y\in\mathfrak{m}\subset\mathfrak{g}$
defines a Killing vector field $Y$ of $(G/H,F)$, and $Y$ induces
a vector field $\tilde{Y}$ on $T(G/H)$. Then
$\tilde{y}$ is the value of $\tilde{Y}$ at $(o,y)\in T(G/H)$.

Proposition \ref{homo-geod-spray-thm} is a reformulation
of Theorem 3.1 in \cite{XD2014}. We omit the proof here.

The geodesic spray can also be determined by $\mathbf{G}(o,y)$ by the {\it spray vector field}
$\eta:\mathfrak{m}\backslash\{0\}\rightarrow\mathfrak{m}$
defined in \cite{Hu2015}. Recall that, given $y\in\mathfrak{m}\backslash\{0\}$, $\eta(y)$ is determined by
$$\langle\eta(y),v\rangle_y^F=\langle y,[v,y]_\mathfrak{m}\rangle_y^F,\quad\forall v\in\mathfrak{m}.$$
With respect to the basis $\{u_1,\ldots,u_n\}$ of $\mathfrak{m}$, we have
$$\eta(y)=\eta^i u_i=g^{il} c_{lj}^k g_{km}y^my^j u_i=
g^{il}c_{lj}^k [F^2]_{y^k}y^j u_i,$$
Thus (\ref{010}) can also be expressed as
$$\mathbf{G}(o,y)=\tilde{y}-\eta^i \partial_{y^i}.$$

\subsection{Totally geodesic submanifolds}
An $n$-dimensional submanifold $N$ of an $m$-dimensional Finsler space $(M,F)$ can be endowed with
the induced submanifold Finsler metric $F'=F|_{N}$. We call $N$ a {\it totally geodesic} submanifold if any geodesic of
$(N,F')$ is also a geodesic of $(M,F)$. By Theorem 3.1 in \cite{XD2014} (or Proposition \ref{homo-geod-spray-thm}), an equivalent condition  for $N$ to be totally geodesic in $(M,F)$ can be given by local tangent frames, i.e.,  around each point $x\in N$ we can find local tangent frame $X_i$ and the corresponding linear coordinates $y=y^iX_i\in TM$, such that
$N$ is spanned by $X_i$ with $i\leq n=\dim N$. Moreover,  at $x$ the geodesic spray
$\mathbf{G}(x,y)=y^i\tilde{X}_i-2\mathrm{G}^i\partial_{y^i}$
satisfies
\begin{equation}\label{012}
\mathrm{G}^i(x,y)=0 \mbox{ when }y\in T_xN\mbox{ and }i>n.
\end{equation}

In the homogeneous case, the above observation gives the following criterion of totally geodesic homogeneous subspaces from Proposition \ref{homo-geod-spray-thm}.

\begin{lemma}
\label{criterion for totally geodesic homogeneous subspace}
Let $(G/H,F)$ be a homogeneous Finsler space with a reductive
decomposition $\mathfrak{g}=\mathfrak{h}+\mathfrak{m}$. Let $G'$ be a closed subgroup of $G$ whose Lie algebra $\mathfrak{g}'$ satisfies the condition
$\mathfrak{g}'=\mathfrak{g}'\cap\mathfrak{h}+\mathfrak{g}'\cap\mathfrak{m}$.
Then $G'/G'\cap H$ is totally geodesic if and only if the spray vector
field $\eta(\cdot)$ of $G/H$ satisfies
\begin{equation}\label{013}
\eta(y)\subset\mathfrak{g}'\cap\mathfrak{m},\quad \forall y\in\mathfrak{g}'\cap\mathfrak{m}\backslash \{0\}.
\end{equation}
\end{lemma}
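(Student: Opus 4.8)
The plan is to use the characterization of totally geodesic submanifolds via the geodesic spray, specialized to the homogeneous setting through Proposition \ref{homo-geod-spray-thm}, and to translate condition \eqref{012} into the stated invariance condition \eqref{013} on the spray vector field $\eta$. The key observation is that the decomposition $\mathfrak{g}'=\mathfrak{g}'\cap\mathfrak{h}+\mathfrak{g}'\cap\mathfrak{m}$ makes $\mathfrak{m}'=\mathfrak{g}'\cap\mathfrak{m}$ a reductive complement for $G'/(G'\cap H)$, which $H'$-equivariantly realizes $T_o(G'/(G'\cap H))$ as a subspace of $T_o(G/H)=\mathfrak{m}$. Since the induced metric on $G'/(G'\cap H)$ is just the restriction of $F$ to $\mathfrak{m}'$, the inner products $\langle\cdot,\cdot\rangle_y^F$ for $y\in\mathfrak{m}'$ agree whether computed in $M$ or in the submanifold.

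First I would choose a basis $\{u_1,\ldots,u_n\}$ of $\mathfrak{m}$ adapted to the splitting, so that $\{u_1,\ldots,u_k\}$ with $k=\dim\mathfrak{m}'$ spans $\mathfrak{m}'$. By Proposition \ref{homo-geod-spray-thm}, for $y\in\mathfrak{m}'\setminus\{0\}$ the geodesic spray at $o$ is $\mathbf{G}(o,y)=\tilde{y}-\eta^i\partial_{y^i}$, where $\eta(y)=\eta^i u_i$ is the spray vector field of $G/H$. The totally geodesic criterion \eqref{012}, read off in this adapted frame, requires exactly that the components $\eta^i$ with $i>k$ vanish for all $y\in\mathfrak{m}'\setminus\{0\}$; that is, $\eta(y)\in\mathfrak{m}'$. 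This is precisely \eqref{013}, giving the ``only if'' direction once I verify that the tangential part $\tilde{y}$ contributes no obstruction, which follows because $y\in\mathfrak{m}'$ generates a Killing field of $G'/(G'\cap H)$ sitting inside the one it generates on $G/H$.

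For the converse, assuming \eqref{013}, I would compute the spray vector field $\eta'$ of the submanifold $G'/(G'\cap H)$ intrinsically and show it coincides with the restriction of $\eta$ to $\mathfrak{m}'$. Since for $y\in\mathfrak{m}'$ and $v\in\mathfrak{m}'$ the defining equation $\langle\eta'(y),v\rangle_y^F=\langle y,[v,y]_{\mathfrak{m}'}\rangle_y^F$ uses the bracket $[v,y]_{\mathfrak{m}'}=[v,y]_{\mathfrak{m}}$ projected into $\mathfrak{m}'$, and since \eqref{013} guarantees $\eta(y)\in\mathfrak{m}'$ so that testing against $v\in\mathfrak{m}'$ detects it fully, one gets $\eta'(y)=\eta(y)$ for all $y\in\mathfrak{m}'\setminus\{0\}$. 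Then a geodesic of $(G'/(G'\cap H),F')$ through $o$ solves the same spray equation as the corresponding curve in $(G/H,F)$, and by homogeneity this extends to every point, establishing that $G'/(G'\cap H)$ is totally geodesic.

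The step I expect to require the most care is the compatibility of the brackets: I must confirm that for $y,v\in\mathfrak{m}'$ the $\mathfrak{m}'$-component of $[v,y]$ computed inside $\mathfrak{g}'$ agrees with the $\mathfrak{m}$-component of $[v,y]$ projected onto $\mathfrak{m}'$. This hinges on the hypothesis $\mathfrak{g}'=\mathfrak{g}'\cap\mathfrak{h}+\mathfrak{g}'\cap\mathfrak{m}$, which ensures $[v,y]$ decomposes compatibly and that the $\mathfrak{h}$-part does not leak into the computation of $\eta$; without this reductivity assumption the identification of the two spray vector fields would break down, so verifying it cleanly is the crux of the argument.
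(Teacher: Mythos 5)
Your proposal is correct and follows essentially the same route as the paper: both directions reduce to reading the totally geodesic criterion (\ref{012}) at $o$ in a basis of $\mathfrak{m}$ adapted to $\mathfrak{m}'=\mathfrak{g}'\cap\mathfrak{m}$ via Proposition \ref{homo-geod-spray-thm}, using the hypothesis $\mathfrak{g}'=\mathfrak{g}'\cap\mathfrak{h}+\mathfrak{g}'\cap\mathfrak{m}$ to get the bracket compatibility $[v,y]_{\mathfrak{m}'}=[v,y]_{\mathfrak{m}}$, and invoking $G'$-homogeneity (the paper's $\mathrm{Ad}(g)$-equivariance of $\eta$) to pass from $o$ to every point of $G'/G'\cap H$. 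Your converse, phrased as the identification $\eta'=\eta|_{\mathfrak{m}'}$ of the intrinsic spray vector field of the subspace with the restricted ambient one, is just a more explicit unpacking of the paper's pointwise verification of (\ref{012}), not a genuinely different argument.
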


\begin{proof}
We first prove that $G'/G'\cap H$ is totally geodesic when  (\ref{013}) is satisfied.

Notice that  $\mathfrak{g}'=\mathfrak{g}'\cap\mathfrak{h}
+\mathfrak{g}'\cap\mathfrak{m}$ is a reductive decomposition for the coset space $G'/G'\cap H$. It is easily seen that there is  a basis $\{u_1,\ldots, u_m\}$ of $\mathfrak{m}$ such that the elements $u_i$',  $i\leq n\leq m$,  span $\mathfrak{g}'\cap\mathfrak{m}$. This basis defines a local
tangent frame of $G/H$ at $o\in G'/G'\cap H$. Then  the assumption
(\ref{013}) implies that (\ref{012}) is valid at $o$.

Since the replacement of $o$ with  $g\in G'$ is just an $\mathrm{Ad}(g)$-change for the spray vector field $\eta(\cdot)$,  (\ref{013}) is still satisfied. This
implies that (\ref{012}) is valid at any point $x\in G'/G'\cap H$.
So $G'/G'\cap H$ is totally geodesic.

The same argument as above can also be used to prove (\ref{013})
when $G'/G'\cap H$ is totally geodesic. This completes the proof
of the lemma.
\end{proof}

\subsection{Finslerian submersion}
A {\it linear submersion} $l:(\mathbf{V}_1,F_1)\rightarrow
(\mathbf{V},F_2)$ between two Minkowski spaces is a surjective
linear map such that $l$ maps the $F_1$-unit ball in
$\mathbf{V}_1$ onto the $F_2$-unit ball in $\mathbf{V}_2$.

Given  $v_2\in \mathbf{V}_2$, there exists
a unique $v_1\in l^{-1}(v_2)$ such that
$$F_1(v_1)=\mathrm{inf}\{F_1(v)| l(v)=v_2\}.$$
We call  $v_1$ the {\it horizonal lift} of $v_2$.

The smooth map $f:(M_1,F_1)\rightarrow (M_2,F_2)$
between two Finsler spaces is called a {\it Finslerian submersion} or simply a {\it submersion}, if for any $x_1\in M_1$, the tangent map
$f_*:(T_{x_1}M_1,F_1)\rightarrow(T_{x_2}M_2,F_2)$, where $x_2=f(x_1)$,  is a linear submersion \cite{AD2001}.

Given a smooth map $f:M_1\rightarrow M_2$ between
two manifolds, and a Finsler metric $F_1$ on $M_1$,
it is natural to ask if there exists a Finsler metric $F_2$
on $M_2$ such that $f$ is Finslerian submersion.
If such $F_2$ exists, we call it the {\it induced metric
defined by submersion} from $F_1$ and $f$.
The following lemma will be useful.

\begin{lemma}\label{quotient-metric-lemma}
Let $(M,F)$ be a smooth Finsler space, and $G$ a closed subgroup of
$I_0(M,F)$ such that the quotient $G\backslash M$ is a smooth manifold and the quotient map $\pi:M\rightarrow G\backslash M$ has surjective tangent maps everywhere,
then there exists a unique induced metric defined by submersion
from $F_1$ and $\pi$.
\end{lemma}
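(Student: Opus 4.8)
The plan is to construct the induced metric $F_2$ on $G\backslash M$ by pushing forward horizontal lifts, and then to verify that it is a well-defined Finsler metric invariant under the choice of fiber point. First I would fix a point $\bar{x}\in G\backslash M$, choose a representative $x\in M$ with $\pi(x)=\bar{x}$, and use the hypothesis that $\pi_*:T_xM\to T_{\bar{x}}(G\backslash M)$ is surjective. The natural candidate is to declare, for each $\bar{v}\in T_{\bar{x}}(G\backslash M)$,
\begin{equation}\label{quotient-metric-def}
F_2(\bar{x},\bar{v})=\inf\{F(x,v)\mid \pi_*(v)=\bar{v}\}.
\end{equation}
This is exactly the condition forcing the tangent map $\pi_*:(T_xM,F)\to(T_{\bar{x}}(G\backslash M),F_2)$ to be a linear submersion in the sense defined above, since it sends the $F$-unit ball onto the $F_2$-unit ball; the horizontal lift of $\bar{v}$ is the minimizer in \eqref{quotient-metric-def}, which exists and is unique because $F$ restricts to a Minkowski norm on $T_xM$ with strictly convex unit ball.

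The key point requiring proof is that the value in \eqref{quotient-metric-def} is independent of the representative $x$ chosen in the fiber $\pi^{-1}(\bar{x})$. Here I would use that $G$ acts by isometries and transitively on each fiber: if $x'=g\cdot x$ for some $g\in G$, then $g_*:T_xM\to T_{x'}M$ is a linear isometry of $F$, and because $\pi\circ g=\pi$ we have $\pi_*\circ g_*=\pi_*$. Consequently $g_*$ carries the set $\{v\mid \pi_*(v)=\bar{v}\}$ at $x$ onto the corresponding set at $x'$ while preserving $F$, so the two infima coincide. This shows $F_2$ is well defined as a function on $T(G\backslash M)$, and it inherits positive homogeneity of degree one directly from $F$.

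Next I would establish that $F_2$ is genuinely a Finsler metric, that is, smooth on the slit tangent bundle with positive-definite Hessian. For smoothness I would work in a local slice: because $\pi$ is a submersion with the local product structure given by $G$-orbits, one can choose a smooth local section $s$ of $\pi$ and smooth horizontal distributions so that the horizontal lift $\bar{v}\mapsto v_1$ depends smoothly on $(\bar{x},\bar{v})$, whence $F_2(\bar{x},\bar{v})=F(s(\bar{x}),v_1)$ is smooth away from the zero section. For strong convexity, I would invoke the general fact (from the theory of Finslerian submersions in \cite{AD2001}) that the image Minkowski norm under a linear submersion of a strongly convex norm is again strongly convex; concretely, the $F_2$-unit ball is the image of the strongly convex $F$-unit ball under the linear map $\pi_*$, and such an image remains a strongly convex body, giving positive-definiteness of the Hessian of $F_2^2$.

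Finally, uniqueness follows almost formally: any Finsler metric $F_2'$ making $\pi$ a Finslerian submersion must, by the definition of linear submersion, send the $F$-unit ball in each $T_xM$ onto its own unit ball in $T_{\bar{x}}(G\backslash M)$, and this image is determined by $\pi_*$ and $F$ alone, so $F_2'=F_2$. The main obstacle I anticipate is the smoothness verification in the previous step rather than well-definedness or uniqueness: one must ensure that the horizontal lift map is smooth jointly in the base point and the tangent vector, which requires care in setting up the local section and in differentiating the implicit minimization condition $\langle v_1,w\rangle^F_{v_1}=0$ for all vertical $w$. Provided the strong convexity of $F$ is used to guarantee a unique, transversally nondegenerate minimizer, the implicit function theorem delivers the required smoothness.
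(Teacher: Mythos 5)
Your proposal is correct and follows essentially the same route as the paper's own proof: the pointwise pushforward of the norm (your infimum formula is exactly the linear-submersion condition on unit balls), with independence of the fiber representative deduced from $\pi_*\circ g_*=\pi_*$ together with the fact that $g_*$ is an $F$-isometry. The only difference is that you spell out the smoothness and strong-convexity verifications (via the implicit function theorem and the linear-submersion theory of \cite{AD2001}), which the paper dispatches with ``it is clear'' and its appeal to the uniqueness of the Minkowski norm making $\pi_*$ a linear submersion.
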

\begin{proof}For any $x\in M$ and $\bar{x}\in G\backslash M$,
there exists a unique Minkowski norm $F_2$ on $T_{\bar{x}}(G\backslash M)$, such that $\pi_*:(T_xM,F_1)
\rightarrow (T_{\bar{x}}(G\backslash M))$ is a linear submersion. It is clear that $F_2(\bar{x},\cdot)$ depends smoothly on $x$.
 Therefore we only need to proof that $F_2(\bar{x},\cdot)$ does not depend on the representative $x$ for $\bar{x}$. Given $x_1$ and $x_2$ with $\bar{x}=\pi(x_1)=\pi(x_2)$,
since $$\pi_*|_{T_{x_2}M}\circ g_*|_{T_{x_1}M} =\pi_*|_{T_{x_1}M},$$
for any $g\in G$ with $g\cdot x_1=x_2$,  the Minkowski norms at $x_1$ and $x_2$ define the same Minkowski norm at $\bar{x}$. This completes the proof of the lemma.
\end{proof}

A special case of Lemma \ref{quotient-metric-lemma} has been used in \cite{XD2017-2} and \cite{XDHH2017}. See Lemma 3.3 in
\cite{XDHH2017}.

\section{Geodesic orbit Finsler space}
\label{section-geodesic orbit space}
Let  $(M,F)$ be a Finsler space  and $G$ a Lie group acting isometrically  on $(M,F)$. Then we call $(M,F)$
a {\it $G$-geodesic orbit space}, if each geodesic of nonzero constant speed on $(M,F)$ is a {\it homogeneous geodesic} of $G$, i.e., it is  the orbit of
a one-parameter subgroup $\exp tu$ of $G$, where $u\in \mathfrak{g}=\mathrm{Lie}(G)$. In the following, if the group $G$ is not
specified, then it is automatically assumed that $G=I_0(M,F)$.

We remark here that in the above definition we do not assume  the action of $G$ on $M$ to be transitive. However, this can be easily deduced  from  the connectedness of $M$. On the other hand, it is not assumed that the $G$-action is
effective, or the closeness of the
image of $G$ in $I_0(M,F)$ is closed. However, this problem can be settled just by replacing $G$ with the closure $\overline{G'}$ of the image $G'$ of $G$ in $I_0(M,F)$. It is easily seen that  $(M,F)$ is also a $\overline{G'}$-geodesic orbit space.

The following proposition provides several equivalent definitions for a geodesic orbit Finsler space. Notice that by assuming
$(M,F)=(G/H,F)$ to be a homogeneous Finsler space, we mean that $G$ acts effectively on $(M,F)$.

\begin{proposition}\label{prop-1}
Let $(G/H,F)$ be a homogeneous Finsler space, with a reductive decomposition $\mathfrak{g}=\mathfrak{h}+\mathfrak{m}$, and denote $[\cdot,\cdot]_\mathfrak{m}$ the
$\mathfrak{m}$-factor in the bracket operation $[\cdot,\cdot]$. Then the following statements are equivalent:
\begin{description}
\item{\rm (1)} $F$ is a $G$-geodesic orbit metric.
\item{\rm (2)} For any $x\in M$, and any nonzero $y\in T_xM$, there exists a Killing vector
field $X\in\mathfrak{g}$ such that $X(x)=y$ and $x$ is a critical point for the function $f(\cdot)=F(X(\cdot))$.
\item{\rm (3)} For any nonzero vector $u\in\mathfrak{m}$, there exists  $u'\in\mathfrak{h}$
such that
\begin{equation}\label{011}
\langle u,[u+u',\mathfrak{m}]_\mathfrak{m}\rangle^F_u=0.
\end{equation}
\item{\rm (4)} The spray vector field $\eta(\cdot):\mathfrak{m}\backslash \{0\}\rightarrow\mathfrak{m}$
is tangent to the $\mathrm{Ad}(H)$-orbits.
\end{description}
\end{proposition}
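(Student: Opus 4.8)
The plan is to fix the base point $o=eH$ and reduce every statement to it. Since $G$ acts transitively by isometries, and isometries carry geodesics to geodesics, one-parameter orbits to one-parameter orbits, Killing fields to Killing fields and $\mathrm{Ad}(H)$-orbits to the corresponding orbits at the image point, it is enough to test each condition along geodesics and vectors issuing from $o$. At $o$ we identify $T_oM$ with $\mathfrak{m}$, so that a nonzero $y\in T_oM$ becomes a nonzero $u\in\mathfrak{m}$; moreover every Killing field $X\in\mathfrak{g}$ with $X(o)=u$ has the form $X=u+u'$ with $u'\in\mathfrak{h}$, and as $X$ varies over all such fields $u'$ ranges over all of $\mathfrak{h}$. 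I would then prove the chain $(1)\Leftrightarrow(4)$, $(4)\Leftrightarrow(3)$ and $(3)\Leftrightarrow(2)$, the remaining equivalences following by transitivity.

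For $(1)\Leftrightarrow(4)$ I would use Proposition \ref{homo-geod-spray-thm} to decide exactly when the orbit $c(t)=\exp(tX)\cdot o$ is a geodesic. Its canonical lift to $TM$ is the integral curve of the lifted field $\tilde{X}$ through $(o,u)$, so $c$ is a geodesic if and only if $\tilde{X}(o,u)=\mathbf{G}(o,u)$. Writing $X=u+u'$, the lift splits as $\tilde{X}=\tilde{u}+\widetilde{u'}$; the isotropy part contributes only the vertical vector given by the infinitesimal action $\mathrm{ad}(u')u=[u',u]\in\mathfrak{m}$, while $\tilde{u}$ matches the first term of $\mathbf{G}(o,u)=\tilde{u}-\eta^i\partial_{y^i}$ from (\ref{010}). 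Comparing vertical parts, $c$ is a geodesic precisely when $\eta(u)=-[u',u]$. Since $-[u',u]$ runs through the whole tangent space $[\mathfrak{h},u]$ of the $\mathrm{Ad}(H)$-orbit through $u$ as $u'$ runs through $\mathfrak{h}$, the existence of a homogeneous geodesic with initial velocity $u$ is equivalent to $\eta(u)\in[\mathfrak{h},u]$. Because the geodesic with a prescribed initial velocity is unique, this is exactly the assertion that $\eta$ is tangent to the $\mathrm{Ad}(H)$-orbits, i.e. $(4)$.

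For $(4)\Leftrightarrow(3)$ I would start from the defining identity $\langle\eta(u),v\rangle_u^F=\langle u,[v,u]_\mathfrak{m}\rangle_u^F$ and rewrite $(3)$ as $\langle\eta(u),v\rangle_u^F=\langle u,[u',v]_\mathfrak{m}\rangle_u^F$ for all $v\in\mathfrak{m}$. The crucial point is the skew-symmetry of the infinitesimal isotropy action in the fundamental tensor at $u$: since each $\mathrm{Ad}(\exp su')$ is a linear isometry of the Minkowski norm $F|_\mathfrak{m}$, differentiating the invariance $\langle\mathrm{Ad}(\exp su')a,\mathrm{Ad}(\exp su')b\rangle_{\mathrm{Ad}(\exp su')u}^F=\langle a,b\rangle_u^F$ at $s=0$ produces an extra Cartan-tensor term coming from the variation of the base point along $[u',u]$. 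Specializing $a=u$ and using that the Cartan tensor is annihilated by its radial direction, $C_u(u,\cdot,\cdot)=0$, this extra term drops out and leaves $\langle u,[u',v]_\mathfrak{m}\rangle_u^F=-\langle[u',u],v\rangle_u^F$. Hence $(3)$ says $\eta(u)=-[u',u]\in[\mathfrak{h},u]$, which is $(4)$. I expect this verification of the base-point skew-symmetry to be the main obstacle, as it is the one step where the genuinely Finslerian (non-quadratic) nature of $F$ enters and the naive Riemannian computation must be corrected.

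Finally, for $(3)\Leftrightarrow(2)$ I would compute the first variation of $f(\cdot)=F(X(\cdot))$ at $o$. Realizing a test direction $w\in\mathfrak{m}$ by the flow of the Killing field $W$ with $W(o)=w$ and using that this flow is by isometries, one obtains $df_o(w)=\tfrac{1}{F(u)}\langle u,(\mathcal{L}_W X)(o)\rangle_u^F$ together with the homogeneity identity $dF_o|_u(\xi)=\langle u,\xi\rangle_u^F/F(u)$. Since $(\mathcal{L}_W X)(o)=-[w,u+u']_\mathfrak{m}$ (the overall sign being immaterial for the ensuing vanishing condition), the requirement that $o$ be a critical point of $f$ reads $\langle u,[u+u',v]_\mathfrak{m}\rangle_u^F=0$ for all $v\in\mathfrak{m}$, which is exactly $(3)$. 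Quantifying over all nonzero $y$ and all admissible $X$ then turns this single-point criterion into the full equivalence $(2)\Leftrightarrow(3)$, and combined with the previous steps it completes the proof.
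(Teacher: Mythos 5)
Your proposal is correct, but there is a wrinkle in the comparison: the paper does not actually prove Proposition \ref{prop-1} at all --- it simply defers to the external reference \cite{Xu2018-preprint} --- so there is no in-paper argument to measure yours against. What your write-up provides is a self-contained proof assembled from exactly the tools the paper does set up: Proposition \ref{homo-geod-spray-thm} and Huang's spray vector field for $(1)\Leftrightarrow(4)$, the $\mathrm{Ad}(H)$-invariance of $F|_{\mathfrak{m}}$ for $(4)\Leftrightarrow(3)$, and a first-variation computation along Killing flows for $(3)\Leftrightarrow(2)$; this is essentially Latifi's criterion for homogeneous geodesics in Finsler geometry, which is also what the cited preprint builds on. You correctly identified and handled the one genuinely Finslerian step: differentiating $\langle\mathrm{Ad}(\exp su')a,\mathrm{Ad}(\exp su')b\rangle^F_{\mathrm{Ad}(\exp su')u}=\langle a,b\rangle^F_u$ produces the extra term $2C_u(a,b,[u',u])$, which vanishes upon setting $a=u$ because the Cartan tensor is killed by its radial direction, leaving the skew-symmetry $\langle u,[u',v]\rangle^F_u=-\langle[u',u],v\rangle^F_u$ that converts condition (3) into $\eta(u)=-[u',u]\in[\mathfrak{h},u]$. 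Two points deserve one explicit sentence each, though neither is a genuine gap: (i) in $(1)\Leftrightarrow(4)$, the single pointwise identity $\tilde{X}(o,u)=\mathbf{G}(o,u)$ forces the whole orbit to be a geodesic only because the geodesic spray $\mathbf{G}$ is invariant under the isometries $\exp(tX)$, so the identity propagates along the orbit by pushing forward with $d(\exp tX)$; (ii) in $(3)\Leftrightarrow(2)$, the expression $\frac{1}{F(u)}\langle u,[W,X](o)\rangle^F_u$ appears to depend on the first derivatives of $W$ at $o$ and not just on $w=W(o)$, but since it computes $df_o(w)$, which depends only on $w$, this is harmless --- and indeed $[W,X](o)=-[w,u+u']_{\mathfrak{m}}$ involves only $w$.
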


For the proof, see \cite{Xu2018-preprint}.

In particular, the equivalence between (1) and (3)
implies the following consequence.

\begin{lemma}\label{lemma-go-centralizer}
Assume $(M,F)=(G/H,F)$ is a $G$-geodesic orbit Finsler space,
with a reductive decomposition $\mathfrak{g}=\mathfrak{h}+\mathfrak{m}$. Then for any
closed connected subgroup $H'\in G$ such that
$\mathfrak{h}'=\mathrm{Lie}(H')$ commutes with $\mathfrak{h}$
and is contained in $\mathfrak{m}$, i.e. $\mathfrak{h}'\subset
\mathfrak{c}_\mathfrak{m}(\mathfrak{h})$, we have the following:
\begin{description}
\item{\rm (1)} $M=G\times H'/H H'$, where the $H'$-factor
in the denominator is diagonally contained in $G\times H'$.
\item{\rm (2)} $F$ is $G\times H'$-invariant.
\end{description}
\end{lemma}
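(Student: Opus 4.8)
The plan is to manufacture the enlarged group $\widehat{G}=G\times H'$ together with an explicit action on $M=G/H$, and then to feed the geodesic orbit hypothesis, in the form of Proposition~\ref{prop-1}(3), into the isotropy computation at $o$. First I would build the action. Since $[\mathfrak{h}',\mathfrak{h}]=0$ and $H'$ is connected, $\mathrm{Ad}(h')$ fixes $\mathfrak{h}$ pointwise for every $h'\in H'$, so $H'$ normalizes (indeed centralizes the identity component of) $H$; hence the right translation $xH\mapsto x(h')^{-1}H$ is well defined on $G/H$. I would let $\widehat{G}=G\times H'$ act by
\begin{equation*}
(g,h')\cdot xH=gx(h')^{-1}H,
\end{equation*}
so that the $G$-factor and the $H'$-factor act by commuting transformations and the action is transitive already through $G$. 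Computing the stabilizer of $o=eH$ gives $(g,h')\cdot o=o$ iff $g(h')^{-1}\in H$ iff $g\in Hh'$, so the isotropy subgroup is $\{(hh',h')\mid h\in H,\ h'\in H'\}$. This is exactly the subgroup generated by $H=\{(h,e)\}$ and the diagonal $\{(h',h')\}$, which establishes the presentation $M=(G\times H')/(HH')$ of statement~(1).

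For (2) I would first reduce global invariance to an algebraic condition at $o$. Since $G\times\{e\}$ is normal in $\widehat{G}$, one checks $\widehat{G}=G\cdot\widehat{H}$ with $\widehat{H}=HH'$ (diagonal), and because $F$ is already $G$-invariant, $\widehat{G}$-invariance of $F$ is equivalent to invariance of the Minkowski norm $F|_{T_oM}$ on $\mathfrak{m}$ under the isotropy representation of $\widehat{H}$. The $H$-part is the given $\mathrm{Ad}(H)$-invariance, so only the diagonal-$H'$ part is new. The one-parameter group $(\exp tw,\exp tw)$, $w\in\mathfrak{h}'$, fixes $o$ and acts on $T_oM=\mathfrak{m}$ by $v\mapsto\mathrm{pr}_{\mathfrak{m}}\mathrm{Ad}(\exp tw)v$, whose infinitesimal generator is the operator $A_w=[w,\cdot]_{\mathfrak{m}}$. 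Since $\exp(tA_w)$ preserves $F$ precisely when $\langle A_w v,v\rangle^F_v=0$ for all $v$, statement~(2) reduces to proving $\langle[w,v]_{\mathfrak{m}},v\rangle^F_v=0$ for every $w\in\mathfrak{h}'$ and every nonzero $v\in\mathfrak{m}$.

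The crux is this last identity, where the geodesic orbit condition enters. Given nonzero $v\in\mathfrak{m}$, Proposition~\ref{prop-1}(3) furnishes $v'\in\mathfrak{h}$ with $\langle v,[v+v',z]_{\mathfrak{m}}\rangle^F_v=0$ for all $z\in\mathfrak{m}$. I would specialize to $z=w$, which is legitimate because $w\in\mathfrak{h}'\subset\mathfrak{c}_{\mathfrak{m}}(\mathfrak{h})\subset\mathfrak{m}$. As $v'\in\mathfrak{h}$ and $w\in\mathfrak{h}'$ commute, the correction term $[v',w]$ vanishes, leaving $\langle v,[v,w]_{\mathfrak{m}}\rangle^F_v=0$, i.e.\ $\langle[w,v]_{\mathfrak{m}},v\rangle^F_v=0$, exactly as required. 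I expect the main obstacle to be bookkeeping rather than insight: verifying that the commuting right action is well defined (the normalizer point, where connectedness of $H'$ is used) and that the reduction of global $\widehat{G}$-invariance to isotropy invariance at $o$ is valid. Once the problem is correctly localized at $o$, the geodesic orbit condition closes it in one line, the decisive feature being that $[\mathfrak{h},\mathfrak{h}']=0$ annihilates the $\mathfrak{h}$-ambiguity $v'$ appearing in Proposition~\ref{prop-1}(3).
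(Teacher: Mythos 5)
Your proposal is correct and takes essentially the same approach as the paper: the decisive step---specializing Proposition \ref{prop-1}(3) to directions $w\in\mathfrak{h}'\subset\mathfrak{m}$ and using $[\mathfrak{h},\mathfrak{h}']=0$ to kill the correction term $v'\in\mathfrak{h}$, yielding $\langle v,[v,w]_\mathfrak{m}\rangle^F_v=0$ for all nonzero $v\in\mathfrak{m}$---is precisely the identity the paper extracts from Proposition \ref{prop-1}. The rest of your argument (the explicit $G\times H'$-action, the isotropy computation at $o$, and the reduction of global invariance to invariance of $F|_{T_oM}$ under the isotropy representation) is a detailed write-up of what the paper compresses into ``the right multiplication of $H'$ induces Killing vector fields on $(G/H,F)$, and the lemma follows easily.''
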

\begin{proof}
Applying Proposition \ref{prop-1} for the $G$-geodesic orbit property of $F$, we can get
$$\langle u,[u,v]_\mathfrak{m}\rangle^F_u=0,\quad\forall u\in\mathfrak{m},v\in\mathfrak{h}'.$$
It implies that the right multiplication of $H'$ induces Killing vector
fields on $(G/H,F)$, and the lemma follows easily.
\end{proof}

Homogeneous totally geodesic subspace and Finslerian submersion
provide important tools  to study geodesic orbit
spaces. Totally geodesic techniques in the Riemannian context
can be naturally generalized to the Finsler situation. For example, we have the following lemma.

\begin{lemma} \label{totally-geodesic-technique-lemma-for-go}
Let $(M,F)$ be a $G$-geodesic orbit Finsler space. For any subset $L$ of isometries fixing $x\in M$, we denote $\mathrm{Fix}_x(L)$ the connected component of the fixed point set
$\mathrm{Fix}(L)$ of the $L$-action on $M$ containing $x$, and $C_G^0(L)$ the identity component of the centralizer $C_G(L)$ of $L$ in $G$. Then the restriction of $F$ to
$\mathrm{Fix}_x(L)=C_G^0(L)\cdot x$ is a
$C_G^0(L)$-geodesic orbit Finsler metric.
\end{lemma}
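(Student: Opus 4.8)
The plan is to establish both conclusions --- the set-theoretic identity $\mathrm{Fix}_x(L)=C_G^0(L)\cdot x$ and the geodesic orbit property of the restricted metric --- from a single averaging device built on the fact that the fixed point set is totally geodesic. Write $N=\mathrm{Fix}_x(L)$ and let $K$ be the closure, in the isometry group of $(M,F)$, of the subgroup generated by $L$. I would first record three preliminaries. First, $N$ is totally geodesic: each $\ell\in L$ commutes with the exponential map and fixes $x$, so near $x$ one has $\mathrm{Fix}(L)=\exp_x V$ with $V=\{v\in T_xM:\mathrm d\ell_x v=v\ \forall\ell\in L\}$; hence $N$ is a submanifold with $T_xN=V$, and the same local description at every point of $N$ shows that the constant-speed geodesics of $(M,F)$ tangent to $N$ remain in $N$, i.e. (by the definition in Section 2.4) the geodesics of $(N,F|_N)$ are geodesics of $(M,F)$. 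Second, $K$ is compact, since the isotropy group of a Finsler isometry embeds through its differential into the compact group of linear isometries of the Minkowski norm on $T_xM$; moreover $K$ fixes $\mathrm{Fix}(L)$ pointwise and $C_G(L)=C_G(K)$. Third, if $g\in C_G(L)$ then $\ell(gx)=g(\ell x)=gx$ for all $\ell$, so $C_G^0(L)$ preserves $N$ and $C_G^0(L)\cdot x\subseteq N$.

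The device is as follows. Let $x'\in N$ and let $c(t)=\exp(tu)\cdot x'$, with $u\in\mathfrak g$, be a homogeneous geodesic through $x'$ with $\dot c(0)=v\in T_{x'}N$; such $u$ exists by the geodesic orbit hypothesis. Every $k\in K$ fixes $x'$ and fixes $v$, so $k\circ c$ is a geodesic with the same initial data as $c$ and therefore equals $c$; consequently $\exp(t\,\mathrm{Ad}(k)u)\cdot x'=k\exp(tu)k^{-1}\cdot x'=c(t)$, and each conjugate $\mathrm{Ad}(k)u$ restricts along $c$ to the single velocity field $\dot c$. Averaging over $K$ against Haar measure, set $\bar u=\int_K\mathrm{Ad}(k)u\,\mathrm dk$. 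Then $\bar u$ is $\mathrm{Ad}(K)$-invariant, so $\bar u\in\mathfrak c_{\mathfrak g}(L)=\mathrm{Lie}(C_G(L))$ (using that $K$ normalizes $G$, which holds e.g. when $G=I_0(M,F)$), while $\bar u(c(t))=\int_K(\mathrm{Ad}(k)u)(c(t))\,\mathrm dk=\dot c(t)$. Hence $c$ is an integral curve of the Killing field $\bar u$, giving $c(t)=\exp(t\bar u)\cdot x'$ with $\exp(t\bar u)\subseteq C_G^0(L)$.

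Both claims then follow at once. Because the geodesic orbit property makes every geodesic defined on all of $\mathbb R$, the space $(N,F|_N)$ is complete, so by the Finslerian Hopf--Rinow theorem any $z\in N$ is joined to $x$ by a geodesic of $N$, which by total geodesy is a geodesic of $(M,F)$; the device rewrites it as $\exp(t\bar u)\cdot x$ and yields $z\in C_G^0(L)\cdot x$. Together with the inclusion from the first paragraph this gives $N=C_G^0(L)\cdot x$. For the remaining assertion, an arbitrary constant-speed geodesic of $(N,F|_N)$ is a geodesic of $(M,F)$; applying the device at its base point, which lies in $N$ and is therefore $L$-fixed, presents it as the orbit of the one-parameter subgroup $\exp(t\bar u)$ of $C_G^0(L)$. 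Thus $F|_N$ is a $C_G^0(L)$-geodesic orbit metric.

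I expect the main obstacle to be the averaging step, precisely the passage from ``each $\mathrm{Ad}(k)u$ generates $c$'' to ``$\bar u$ generates $c$'': one cannot exponentiate an average, and the resolution is the observation that all the conjugates $\mathrm{Ad}(k)u$ agree with $\dot c$ along $c$, so their average is an integral field for $c$. The secondary care concerns ensuring that $K$ is compact and that $\mathrm{Ad}(K)$ preserves $\mathfrak g$ (automatic for $G=I_0(M,F)$), and invoking completeness so that points of $N$ are joined by geodesics lying in $N$.
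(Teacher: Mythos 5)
Your proof is correct and its core device is the same as the paper's: average the generating Killing vector field over the compact group obtained from $L$, observe that every conjugate $\mathrm{Ad}(k)u$ agrees with the velocity field along the geodesic (because the geodesic lies in the fixed-point set, which $K$ fixes pointwise), and conclude that the averaged vector lies in $\mathfrak{c}_{\mathfrak{g}}(L)=\mathrm{Lie}(C_G^0(L))$ while still generating the geodesic. The only differences are additions: you supply proofs of the facts the paper simply asserts, namely total geodesy of $\mathrm{Fix}_x(L)$ and the identity $\mathrm{Fix}_x(L)=C_G^0(L)\cdot x$ (via completeness and Hopf--Rinow), and you explicitly flag the assumption, implicit in the paper's averaging step as well, that $\mathrm{Ad}(L)$ preserves $\mathfrak{g}$ (automatic when $L\subset G$ or $G=I_0(M,F)$, as in all of the paper's applications).
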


\begin{proof} Without loss of generality, we can assume
that $L$ is a compact Lie group. Then the submanifold
$\mathrm{Fix}_x(L)=C_G^0(L)\cdot x$ is
totally geodesic.

Consider a geodesic $\gamma$ of $(\mathrm{Fix}_x(L),F|_{\mathrm{Fix}_x(L)})$. Then $\gamma$ is also a geodesic of $(M,F)$. By the geodesic orbit property of $(M,F)$, there exists a Killing vector field $X$ such that $X$ is defined by some vector $v\in\mathfrak{g}$,
and $\gamma$ is an integration curve of $X$.
Since $L$ is a compact Lie group, the average
$$X'=\int_L\mathrm{Ad}(g)X d\mathrm{Vol}/\mathrm{Vol}(L), $$
where $d\mathrm{Vol}$ is a fixed bi-invariant volume form,
is also a Killing vector field of $(M,F)$, which is  defined by
$$v'=\int_L\mathrm{Ad}(g)v d\mathrm{Vol}/\mathrm{Vol}(L).$$
Restricted to the geodesic $\gamma$, $X'$ coincides with $X$, i.e.,  $\gamma$ is an integration curve of $X'$. It is also easy to see that $$v'\in\mathfrak{c}_\mathfrak{g}(L)=\mathrm{Lie}(C^0_G(L)).$$
So
$(\mathrm{Fix}_x(L),F|_{\mathrm{Fix}_x(L)})$ is a $C^0_G(L)$-geodesic orbit Finsler space.
\end{proof}

On the other hand, many submersion techniques for geodesic orbit Riemannian manifolds do not work in the Finsler context.
Fortunately, we still have the following:

\begin{theorem}\label{go-submersion-thm}
Let $(G/H,F)$ be a $G$-geodesic orbit Finsler space such that the $G$-action is effective. Suppose  $H_1$ is
a closed normal subgroup of $G$, and $H_2$ is the maximal normal subgroup of $G$ contained in $ H_1H$. Denote $G'=G/H_2$ and $H'=H_1H/H_2$. Then we have
\begin{description}
\item{\rm (1)} $G'$ acts effectively on $G'/H'$.
\item{\rm (2)} There exists a unique $G'$-invariant metric $F'$ on $G'/H'$ defined by submersion from the metric $F$ and the natural projection $$\pi:G/H\rightarrow G/H_1H=(G/H_2)/(H_1H/H_2)=G'/H'.$$
\item{\rm (3)} $(G'/H',F')$ is a $G'$-geodesic orbit Finsler space.
\end{description}
\end{theorem}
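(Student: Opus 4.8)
The plan is to establish the three assertions in the order stated, since parts (2) and (3) rely on the effectiveness obtained in (1).

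For (1), I would argue purely group-theoretically. The ineffective kernel of the $G'$-action on $G'/H'$ is the maximal normal subgroup of $G'$ contained in $H'$. Its preimage under the quotient $G\rightarrow G'=G/H_2$ is a normal subgroup $\tilde{N}\trianglelefteq G$ with $H_2\subseteq\tilde{N}\subseteq H_1H$. By the very definition of $H_2$ as the maximal normal subgroup of $G$ contained in $H_1H$, we get $\tilde{N}\subseteq H_2$, hence $\tilde{N}=H_2$, so the kernel $\tilde{N}/H_2$ is trivial and $G'$ acts effectively.

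For (2), the key observation is that $\pi$ is the quotient map for the isometric left action of the closed normal subgroup $H_1\subseteq G$ on $G/H$. Since $H_1$ is normal, a short computation gives $H_1\cdot gH=gH_1H/H$, so the $H_1$-orbits are exactly the $\pi$-fibers and $H_1\backslash(G/H)=G/H_1H=G'/H'$. I would then invoke Lemma \ref{quotient-metric-lemma}, applied to the group $H_1$, to produce the unique induced submersion metric $F'$ on $G/H_1H$. To see that $F'$ is $G'$-invariant, note that $\pi$ is equivariant for the two left $G$-actions, so each $a\in G$ descends to a diffeomorphism of $G/H_1H$ satisfying $\pi\circ a=a\circ\pi$; since $F'$ is determined from $F$ and $\pi$ by the infimum $F'(w)=\inf\{F(v)\mid\pi_*v=w\}$ and $a$ is an $F$-isometry, $a$ preserves $F'$. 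As $H_2\subseteq H_1H$ acts trivially on $G/H_1H$, the $G$-action factors through $G'$, so $F'$ is $G'$-invariant; uniqueness is part of Lemma \ref{quotient-metric-lemma}.

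For (3), I would verify condition (2) of Proposition \ref{prop-1} for $(G'/H',F')$. Given $x'\in G'/H'$ and a nonzero $w\in T_{x'}(G'/H')$, choose $x\in\pi^{-1}(x')$ and let $v\in T_x(G/H)$ be the horizontal lift of $w$, so that $\pi_*v=w$ and $F(v)=F'(w)$. Since $(G/H,F)$ is $G$-geodesic orbit, Proposition \ref{prop-1}(2) yields a Killing field $X\in\mathfrak{g}$ with $X(x)=v$ and $x$ a critical point of $h:=F(X(\cdot))$. Let $X'\in\mathfrak{g}'=\mathfrak{g}/\mathrm{Lie}(H_2)$ be the image of $X$; by $G$-equivariance of $\pi$ its associated Killing field on $G'/H'$ satisfies $X'(\pi(q))=\pi_*X(q)$, whence $X'(x')=\pi_*v=w$. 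Writing $h':=F'(X'(\cdot))$, the submersion inequality $F'(\pi_*u)\leq F(u)$ gives $h'\circ\pi\leq h$ everywhere, while the choice of $v$ as the horizontal lift forces equality at $x$: $h'(x')=F'(w)=F(v)=h(x)$. Thus $h-h'\circ\pi\geq0$ attains its minimum $0$ at $x$, so $d(h-h'\circ\pi)_x=0$; combined with $dh_x=0$ this yields $d(h'\circ\pi)_x=0$, and since $\pi_{*,x}$ is surjective we conclude $dh'_{x'}=0$. Hence $x'$ is a critical point of $F'(X'(\cdot))$, and Proposition \ref{prop-1} shows that $(G'/H',F')$ is $G'$-geodesic orbit.

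The main obstacle is precisely this critical-point transfer in (3): an arbitrary critical point of $h$ need not descend to one of $h'$, and a general Killing field whose projection hits $w$ will only give $h'\circ\pi\leq h$ without equality at the base point. The resolution is to select the Killing field upstairs whose value at $x$ is the horizontal lift of $w$, forcing $h$ and $h'\circ\pi$ to touch at $x$ and turning $x$ into a minimum of the nonnegative difference, which is exactly what transfers the vanishing of the differential through the submersion.
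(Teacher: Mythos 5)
Your proposal is correct, and it is worth separating the three parts. For (1) and (2) you follow essentially the paper's own route: the paper also identifies the ineffective kernel with the largest normal subgroup of $G$ inside $H_1H$ (phrased there as $\cap_{g\in G}gH_1Hg^{-1}$), also identifies $G'/H'$ with the left-quotient $H_1\backslash(G/H)$ using normality of $H_1$, applies Lemma \ref{quotient-metric-lemma} to the left $H_1$-action to get $F'$, and deduces $G'$-invariance from equivariance of $\pi$ and uniqueness of the induced metric. Part (3) is where you genuinely diverge. The paper's proof lifts a given geodesic $\gamma'$ of $(G'/H',F')$ horizontally to a geodesic $\gamma$ of $(G/H,F)$ --- this uses the theorem of \'{A}lvarez Paiva and Dur\'{a}n \cite{AD2001} that horizontal lifts of geodesics under a Finslerian submersion are geodesics --- then writes $\gamma(t)=\exp(tu)\cdot x$ by the geodesic orbit property upstairs and projects the one-parameter orbit down by equivariance. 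You never lift geodesics at all: you instead transfer condition (2) of Proposition \ref{prop-1} through the submersion, choosing the Killing field $X$ upstairs whose value at $x$ is the \emph{horizontal lift} $v$ of the prescribed vector $w$, so that $h'\circ\pi\leq h$ everywhere with equality at $x$, and then the vanishing of $dh'_{x'}$ follows from $dh_x=0$ and surjectivity of $\pi_{*,x}$. This touching argument is sound (the needed smoothness holds because $X(x)=v\neq0$ and $X'(x')=w\neq0$, so both norm functions are smooth near the relevant points, and your equality $F(v)=F'(w)$ is exactly the defining property of the horizontal lift). The trade-off: the paper's argument is shorter and yields the concrete statement that every geodesic downstairs is the projection of a one-parameter orbit upstairs, but it leans on the geodesic-lifting theorem of \cite{AD2001}; your argument needs only the pointwise linear-algebra notion of horizontal lift together with Proposition \ref{prop-1}, so it is more self-contained, and it isolates precisely (as you note) why an arbitrary lift of $w$ would not suffice.
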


\begin{proof}
 Since $H_1$ is closed and normal in $G$, and $H$ is compact, the product $H_1H$ is a closed subgroup of $G$. The
largest normal subgroup $H_2$ of $G$ contained in $H_1H$ coincides with $\cap_{g\in G}gH_1Hg^{-1}=\cap_{g\in G}H_1 gHg^{-1}$. So $H_2$ contains $H_1$, and $H_2$ is closed in $G$.
The subgroup $H_2$ consists of  all the elements $g\in G$ which
acts as the identity map on $G/H_1H$. Thus  $G'=G/H_2$ acts
effectively on $G/H_1H=(G/H_2)/(H_1H/H_2)=G'/H'$. This proves (1).

To prove (2),  we first note that, since $H_1$ is a normal subgroup of $G$, we have $gH_1H=H_1gH$ for any $g\in G$. Then $G'/H'$ can be identified with $H_1\backslash G/H$,
the orbit space of left $H_1$-actions on $G/H$. Meanwhile,  the quotient map for $H_1\backslash G/H$, $\pi_1:G/H\rightarrow H_1\backslash G/H=G'/H'$ coincides with the projection map $\pi:G/H\rightarrow G/H_1H=G'/H'$.
By Lemma \ref{quotient-metric-lemma}, a unique Finsler metric $F'$ on $H_1\backslash G/H$ is well defined by submersion from $F$ and $\pi'$. It is the metric indicated in (2) of the lemma.
The uniqueness of $F'$ is obvious.

The $G$-action on $G/H$
permutes the fibers of the projection $\pi$. It naturally induces a $G$-action on $G/H_1H=G'/H'$ which becomes effective through $G'$. By the uniqueness of $F'$, the $G'$-actions
are isometries on $(G/H,F')$. This proves (2).

Finally, for any geodesic $\gamma'$ in $(G'/H',F')$, its horizonal lift $\gamma$ in $(G/H,F)$ is a geodesic in $(G/H,F)$ \cite{AD2001}. By the definition of geodesic
orbit spaces, $\gamma$ is the orbit of a one-parameter subgroup, i.e., there exists a nonzero element $u\in \mathfrak{g}$ and $x\in G/H$ such that $\gamma (t)=\exp tu\cdot x$.
Then it is obvious  that $\gamma'$ is the orbit of
$\exp tu'$, where $u'$ is the image of $u$ in
$\mathfrak{g}'=\mathfrak{g}/\mathfrak{h}_2$, where $\mathfrak{g}'$ and $\mathfrak{h}_2$ are the Lie algebras of
$G'$ and $H_2$, respectively. Therefore $(G'/H',F')$
is a $G'$-geodesic orbit space, which proves (3).
\end{proof}

At the end of this section, we remark that Lemma 3.1 in \cite{XD2017} provides an important technique in this paper.
It can be refined to the following lemma, such that we do not
need to assume the effectiveness of the $G$-action, or the closeness for the image of $G$ in the isometry group.

\begin{lemma}\label{GO-Abelian-ideal-in-m-KVFCL}
Let $(G/H,F)$ be a $G$-geodesic orbit Finsler space, and
$\mathfrak{a}$ be an Abelian ideal of $\mathfrak{g}=\mathrm{Lie}(G)$ which has a trivial
intersection with $\mathfrak{h}=\mathrm{Lie}(H)$.
Then each vector in $\mathfrak{a}$ defines a Killing vector field of constant length on $(G/H,F)$.
\end{lemma}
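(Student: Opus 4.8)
The plan is to prove the statement by reducing \textbf{constant length} to a pointwise algebraic identity of geodesic-vector type, and then to establish that identity from the geodesic orbit property together with the two structural facts that $\mathfrak{a}$ is abelian and is an ideal.

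\textbf{Step 1 (reduction to an infinitesimal identity).} Fix a nonzero $a\in\mathfrak{a}$; since $\mathfrak{a}\cap\mathfrak{h}=\{0\}$ its $\mathfrak{m}$-component $p:=a_\mathfrak{m}$ is nonzero, so the Killing field $X_a$ is nowhere vanishing. For $g\in G$ one has $X_a(g\cdot o)=g_*\big((\mathrm{Ad}(g^{-1})a)_\mathfrak{m}\big)$, hence $F(X_a(g\cdot o))=F\big((\mathrm{Ad}(g^{-1})a)_\mathfrak{m}\big)$. Differentiating along $g=\exp(tv)$ with $v\in\mathfrak{m}$ and using $dF_p(w)=F(p)^{-1}\langle p,w\rangle^F_p$ gives a differential proportional to $\langle p,[a,v]_\mathfrak{m}\rangle^F_p$. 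Because $\mathfrak{a}$ is an ideal, $\mathrm{Ad}(g^{-1})a\in\mathfrak{a}$ for all $g$; thus constant length of $X_a$ amounts to the identity
\[
\langle a_\mathfrak{m},[a,v]_\mathfrak{m}\rangle^F_{a_\mathfrak{m}}=0\qquad(\forall v\in\mathfrak{m})
\]
holding at every $\mathrm{Ad}$-translate of $a$, so it suffices to establish this identity for \emph{every} element of $\mathfrak{a}$; equivalently, every orbit of $X_a$ is a homogeneous geodesic.

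\textbf{Step 2 (structural identities).} I will use two inputs. First, differentiating the $\mathrm{Ad}(H)$-invariance of $F$ yields $\langle y,[X,y]\rangle^F_y=0$ for all $X\in\mathfrak{h}$, $y\in\mathfrak{m}$, which annihilates every radial isotropy term. Second, since $\mathfrak{a}$ is an abelian ideal, $\mathrm{ad}(a)$ maps $\mathfrak{g}$ into $\mathfrak{a}$ and kills $\mathfrak{a}$, so $\mathrm{ad}(a)^2=0$ on $\mathfrak{g}$. Writing $c=[v,a]\in\mathfrak{a}$ and projecting $[a,c]=0$ to $\mathfrak{m}$, the $\mathfrak{h}$-brackets cancel and one obtains $[a,c_\mathfrak{m}]_\mathfrak{m}=[c_\mathfrak{h},a_\mathfrak{m}]$; pairing with $a_\mathfrak{m}$ and applying the first input shows the target identity already holds for all $v$ in the subspace $[a,[\mathfrak{m},a]_\mathfrak{m}]_\mathfrak{m}$. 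Thus the abelian--ideal structure alone settles the ``second level'' of the filtration defined by $\mathrm{ad}(a)_\mathfrak{m}$.

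\textbf{Step 3 (bringing in geodesic orbit).} To reach the remaining first-level directions I invoke Proposition \ref{prop-1}(3) at the velocity $p$: there is $u'\in\mathfrak{h}$ with $\langle p,[p+u',v]_\mathfrak{m}\rangle^F_p=0$ for all $v$. Since $[a,v]_\mathfrak{m}=[p+u',v]_\mathfrak{m}+[a_\mathfrak{h}-u',v]$, the target identity reduces to showing that $z:=a_\mathfrak{h}-u'\in\mathfrak{h}$ satisfies $\langle p,[z,v]\rangle^F_p=0$ for all $v\in\mathfrak{m}$.

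\textbf{Main obstacle.} This last reduction is the hard part. In the Riemannian setting it is immediate, since $\mathrm{ad}(z)$ is skew for the basepoint-independent inner product and $\langle p,[z,v]\rangle=-\langle[z,p],v\rangle$ is controlled directly. In the Finsler case $\mathrm{ad}(z)$ is skew for $\langle\cdot,\cdot\rangle^F_p$ only when $[z,p]=0$, the Cartan-tensor term obstructing skewness otherwise, and in general $z$ does not fix $p$. I expect to overcome this by exploiting the compactness of $\mathfrak{h}$: averaging the relevant relation over the torus $\overline{\exp(\mathbb{R}z)}\subset H$, together with the nilpotency $\mathrm{ad}(a)^2=0$ and the second-level identity of Step 2, should force $\langle p,[z,\cdot]\rangle^F_p\equiv 0$ and close the proof. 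This averaging-plus-nilpotency step is the crux and the place where the argument genuinely departs from the Riemannian one.
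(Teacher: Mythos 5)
Your proposal has a genuine gap, and you name it yourself: everything funnels into the claim $\langle p,[z,v]_{\mathfrak{m}}\rangle^F_p=0$ for the isotropy element $z=a_{\mathfrak{h}}-u'$, and at that point you offer only the expectation that ``averaging plus nilpotency should force'' it. That is not an argument, and it is unlikely to become one: averaging over the torus $\overline{\exp(\mathbb{R}z)}$ projects onto its fixed-point set, but the inner product $\langle\cdot,\cdot\rangle^F_p$ is anchored at the particular vector $p$, which is \emph{not} fixed by that torus, so the averaged relation loses all contact with the fundamental tensor at $p$. The idea you are missing is that one should not try to restore skew-symmetry of $\mathrm{ad}(z)$ at all (you correctly observe that the Cartan tensor obstructs it); one dualizes instead. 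Work with a reductive decomposition in which $\mathfrak{a}\subset\mathfrak{m}$, and let $\eta$ be the spray vector field, so that $\langle\eta(w),v\rangle^F_w=\langle w,[v,w]_{\mathfrak{m}}\rangle^F_w$ for $w\in\mathfrak{a}\setminus\{0\}$ and $v\in\mathfrak{m}$. By Proposition \ref{prop-1}\,(4), the geodesic orbit property gives $\eta(w)\in[\mathfrak{h},w]$, and since $\mathfrak{a}$ is an ideal, $[\mathfrak{h},w]\subset\mathfrak{a}$, whence $\eta(w)\in\mathfrak{a}$. Since $\mathfrak{a}$ is abelian, $\langle\eta(w),v\rangle^F_w=\langle w,[v,w]\rangle^F_w=0$ for every $v\in\mathfrak{a}$, so $\eta(w)$ is $\langle\cdot,\cdot\rangle^F_w$-orthogonal to $\mathfrak{a}$. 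Positive definiteness then forces $\eta(w)=0$, which is exactly the identity your Step 1 requires (the $\mathfrak{h}$-directions being handled by $\mathrm{Ad}(H)$-invariance, as in your Step 2). No skew-symmetry and no averaging are needed; this is the argument of Lemma 3.1 of \cite{XD2017} that the paper invokes.

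The second ingredient you skip is precisely what makes the above work: a reductive decomposition with $\mathfrak{a}\subset\mathfrak{m}$, which kills $a_{\mathfrak{h}}$ and hence the troublesome $z$ at the source. Producing such a decomposition under the lemma's weak hypotheses (no effectiveness of the $G$-action, no closedness of its image in $I_0(M,F)$, hence no a priori compact $H$) is in fact the entire written content of the paper's proof: quotient by the maximal normal subgroup $G_0$ of $G$ contained in $H$ to make the action effective, pass to the closures $\overline{G}$ and $\overline{H}$ inside $I_0(M,F)$, average to get an $\mathrm{Ad}(\overline{H})$-invariant inner product on $\mathfrak{g}$ in which $\mathfrak{h}\perp\mathfrak{a}$, and set $\mathfrak{m}=\mathfrak{h}^{\perp}$. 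Your proposal assumes from the outset a reductive decomposition and compactness of $\mathfrak{h}$ (neither is given), and by insisting on an arbitrary decomposition with $a_{\mathfrak{h}}\neq 0$ it creates the very obstacle that stops it. So the proposal both omits the reduction the paper actually carries out and lacks the orthogonality argument that closes the proof.
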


\begin{proof}
We only need to provide a reductive decomposition $\mathfrak{g}=\mathfrak{h}+\mathfrak{m}$,
i.e. $[\mathfrak{h},\mathfrak{m}]\subset\mathfrak{m}$, such that $\mathfrak{a}$ is contained in
$\mathfrak{m}$. Then we can use the argument for Lemma 3.1 in \cite{XD2017} to prove this lemma.

The maximal normal subgroup $G_0$ of $G$ contained in $H$ is the intersection of all $G$-conjugations of $H$, thus it is a closed subgroup of $G$. Denote $\mathfrak{g}_0=\mathrm{Lie}(G_0)$. The coset space $G/H$ can also be presented as $G'/H'=(G/G_0)/(H/G_0)$ such that the metric $F$ is $G'$-invariant and the $G'$-action on $G'/H'$ is effective. Obviously $\mathfrak{a}'=(\mathfrak{a}+\mathfrak{g}_0)/\mathfrak{g}_0$ is an Abelian ideal of $\mathfrak{g}/\mathfrak{g}_0$ and has a zero intersection with
$\mathfrak{h}/\mathfrak{g}_0$. So we may replace $G/H$ and $\mathfrak{a}$ with $G'/H'$ and $\mathfrak{a}'$, then we have
the effectiveness of $G$, i.e. $G$ is a subgroup of $I_0(M,F)$.

Let $\overline{G}$ and $\overline{H}$ be the closure of $G$ and $H$ in $I_0(M,F)$ respectively. The $\mathrm{Ad}(H)$-action
on $\mathrm{Lie}(\overline{G})$ preserves $\mathfrak{h}$, $\mathfrak{a}$ and $\mathfrak{g}$. By continuity, so does $\mathrm{Ad}(\overline{H})$. Since $\overline{H}$ is compact, we can find
an $\mathrm{Ad}(\overline{H})$-invariant inner product on $\mathfrak{g}$ such that
$\mathfrak{h}$ and $\mathfrak{a}$ are orthogonal to each other. With respect to this inner product,
we have an orthogonal reductive decomposition $\mathfrak{g}=\mathfrak{h}+\mathfrak{m}$, such that
$\mathfrak{a}$ is contained in $\mathfrak{m}$.
\end{proof}

\section{Geodesic orbit Finsler nilmanifold}
In this section, we discuss geodesic orbit Finsler metrics on a nilmanifold.

\label{section 4}
We first recall
some fundamental notions in  general theory of Lie groups and Lie algebras.

The {\it radical} $\mathrm{Rad}(G)$
and the {\it nil-radical} $\mathrm{Nil}(G)$
of a connected Lie group $G$ are the connected maximal solvable and nilpotent
subgroups of $G$,  respectively. They are the unique closed subgroups, generated by the maximal solvable and nilpotent
ideals of $\mathfrak{g}$, respectively (see Definition 16.2.1 and Proposition 16.2.2 in \cite{HN2012}).
We denote $\mathfrak{rad}(\mathfrak{g})=\mathrm{Lie}(\mathrm{Rad}(G))$
and
$\mathfrak{nil}(\mathfrak{g})=\mathrm{Lie}(\mathrm{Nil}(G))$,
and call them the {\it radical} and {\it nil-radical} of $\mathfrak{g}$ \cite{HN2012}.

In the case that  $G$ acts
effectively, isometrically and transitively on
a Finsler manifold, we have the following useful lemma.

\begin{lemma}\label{lemma-3-1}
 Let $(M,F)$ be a homogeneous Finsler space, and $G$  a closed connected subgroup in $I_0(M,F)$ which acts transitively on $M$. Then $\mathfrak{n}=\mathfrak{nil}(\mathfrak{g})$ has a zero intersection with
the Lie algebra of any isotropy subgroup of $G$.
\end{lemma}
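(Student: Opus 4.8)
The plan is to show that a nonzero element of $\mathfrak{h}\cap\mathfrak{n}$ would be forced to lie in the center of $\mathfrak{g}$ and to act trivially on $M$, contradicting effectiveness. Fix a point $o\in M$ with isotropy subgroup $H$; since $G$ is closed in $I_0(M,F)$, $H$ is compact (as already noted in the homogeneous Finsler setting of Section~2). Because $\mathfrak{n}=\mathfrak{nil}(\mathfrak{g})$ is an ideal, it is $\mathrm{Ad}(G)$-invariant, and all isotropy subalgebras are $\mathrm{Ad}(G)$-conjugate; hence $\mathrm{Ad}(g)\mathfrak{h}\cap\mathfrak{n}=\mathrm{Ad}(g)(\mathfrak{h}\cap\mathfrak{n})$, so it suffices to treat this single $H$. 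Write $\mathfrak{a}=\mathfrak{h}\cap\mathfrak{n}$ and take an arbitrary $X\in\mathfrak{a}$.

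First I would check that $\mathrm{ad}_\mathfrak{g}(X)$ is nilpotent. Since $\mathfrak{n}$ is an ideal, $\mathrm{ad}(X)\mathfrak{g}\subset\mathfrak{n}$, and since $\mathfrak{n}$ is a nilpotent Lie algebra, $\mathrm{ad}(X)|_\mathfrak{n}$ is nilpotent, say $\mathrm{ad}(X)^m|_\mathfrak{n}=0$; then $\mathrm{ad}(X)^{m+1}\mathfrak{g}\subset\mathrm{ad}(X)^m\mathfrak{n}=0$, so $\mathrm{ad}_\mathfrak{g}(X)$ is nilpotent. Next I would use compactness of $H$: as $X\in\mathfrak{h}=\mathrm{Lie}(H)$, the curve $\mathrm{Ad}(\exp tX)$ lies in the compact group $\mathrm{Ad}(H)$, which preserves some inner product on $\mathfrak{g}$. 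Consequently $\mathrm{ad}(X)$ is skew-symmetric for this inner product, hence diagonalizable over $\mathbb{C}$ with purely imaginary eigenvalues, i.e.\ semisimple. A linear operator that is simultaneously nilpotent and semisimple must vanish, so $\mathrm{ad}_\mathfrak{g}(X)=0$ and therefore $X\in\mathfrak{z}(\mathfrak{g})$.

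The final step concludes via effectiveness. The one-parameter subgroup $\exp tX$ fixes $o$ because $X\in\mathfrak{h}$, and it is central in $G$ because $X\in\mathfrak{z}(\mathfrak{g})$; hence for every $g\in G$ we have $\exp(tX)\cdot(g\cdot o)=g\cdot\exp(tX)\cdot o=g\cdot o$. Since $G$ acts transitively, $\exp tX$ fixes every point of $M$, and since $G\subset I_0(M,F)$ acts effectively, this forces $\exp tX=e$ for all $t$, whence $X=0$. Therefore $\mathfrak{h}\cap\mathfrak{n}=0$, and by the conjugacy remark above the same holds for the isotropy subalgebra at any point.

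I expect no serious obstacle here; the heart of the argument is the dichotomy in the middle paragraph, where nilpotence inherited from the nilradical and semisimplicity forced by compactness of the isotropy together collapse $\mathrm{ad}(X)$ to zero. The only points requiring care are the invariance of $\mathfrak{n}$ under $\mathrm{Ad}(G)$ (so that one basepoint suffices) and the clean invocation of the effectiveness hypothesis built into $G\subset I_0(M,F)$.
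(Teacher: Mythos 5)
Your proof is correct and follows essentially the same route as the paper's: elements of $\mathfrak{h}$ are $\mathrm{ad}$-semisimple (by compactness of $H$), elements of $\mathfrak{n}$ are $\mathrm{ad}$-nilpotent, so $\mathfrak{h}\cap\mathfrak{n}\subset\mathfrak{c}(\mathfrak{g})$, and effectiveness of the $G$-action then forces $\mathfrak{h}\cap\mathfrak{c}(\mathfrak{g})=0$. You merely spell out the details (the nilpotence of $\mathrm{ad}(X)$ on all of $\mathfrak{g}$, the skew-symmetry argument, and the conjugacy of isotropy subalgebras) that the paper treats as obvious.
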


\begin{proof}
Let $H$ be the compact isotropy subgroup of $G$ at a point $x\in M$. Denote $\mathfrak{n}=\mathfrak{nil}(\mathfrak{g})$.
Obviously $\mathrm{ad}(u):\mathfrak{g}\rightarrow\mathfrak{g}$ is semisimple if
$u\in\mathfrak{h}$, and nilpotent if $u\in\mathfrak{n}$.
So $\mathfrak{h}\cap\mathfrak{n}\subset\mathfrak{c}(\mathfrak{g})$.
Since $G\subset I_0(M,F)$, its action on $M$ is effective. Thus $$\mathfrak{h}\cap\mathfrak{n}=
\mathfrak{h}\cap\mathfrak{c}(\mathfrak{g})=0,$$
which proves the lemma.
\end{proof}

The following result is a generalization
of a theorem of C. Gordon in \cite{Go1996}.

\begin{theorem}\label{theorem-3-2}
 Let $(M,F)$ be a $G$-geodesic orbit Finsler space, where $G$ is a closed connected subgroup of $I_0(M,F)$. Then
the step-size  of the nil-radical $\mathfrak{nil}(\mathfrak{g})$ is at most 2.
\end{theorem}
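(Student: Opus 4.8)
The plan is to work inside the Lie algebra $\mathfrak{g}=\mathrm{Lie}(G)$ and to exploit the two structural facts already established: that $\mathfrak{n}=\mathfrak{nil}(\mathfrak{g})$ meets every isotropy subalgebra trivially (Lemma \ref{lemma-3-1}), and that abelian ideals sitting inside $\mathfrak{m}$ produce Killing fields of constant length (Lemma \ref{GO-Abelian-ideal-in-m-KVFCL}). First I would fix a point $o$ with compact isotropy $H$ and choose an $\mathrm{Ad}(\overline{H})$-invariant reductive decomposition $\mathfrak{g}=\mathfrak{h}+\mathfrak{m}$ with $\mathfrak{n}\subset\mathfrak{m}$; this is possible since $\mathfrak{n}\cap\mathfrak{h}=0$ and both $\mathfrak{h}$ and $\mathfrak{n}$ are $\mathrm{Ad}(\overline{H})$-invariant, so averaging an inner product in which they are orthogonal keeps them orthogonal. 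Each term $\mathfrak{n}^{(k)}$ of the lower central series is characteristic in $\mathfrak{n}$, hence (by the Jacobi identity, since $\mathfrak{n}$ is an ideal of $\mathfrak{g}$) an ideal of $\mathfrak{g}$ contained in $\mathfrak{m}$; the last nonzero term, and more generally the center $\mathfrak{z}=\mathfrak{z}(\mathfrak{n})$, is an abelian ideal of $\mathfrak{g}$ lying in $\mathfrak{m}$. The goal $[\mathfrak{n},[\mathfrak{n},\mathfrak{n}]]=0$ is equivalent to $[\mathfrak{n},\mathfrak{n}]\subseteq\mathfrak{z}$.

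Next I would extract the analytic content of constant length. Applying Lemma \ref{GO-Abelian-ideal-in-m-KVFCL} to $\mathfrak{z}$, every $Z\in\mathfrak{z}$ gives a Killing field $\tilde Z$ with $F(\tilde Z)\equiv\mathrm{const}$. Evaluating $\tilde Z$ at $\exp(sw)\cdot o$ for $w\in\mathfrak{m}$ and pulling back by $\exp(-sw)$ identifies its length with $F\big((\mathrm{Ad}(\exp(-sw))Z)_{\mathfrak{m}}\big)$; differentiating at $s=0$ and using $dF_Z(\cdot)=F(Z)^{-1}\langle Z,\cdot\rangle^F_Z$ yields the infinitesimal identity
\begin{equation}
\langle Z,[w,Z]_{\mathfrak{m}}\rangle^F_Z=0,\qquad\forall\,Z\in\mathfrak{z},\ \forall\,w\in\mathfrak{m}.
\end{equation}
In parallel I would apply the geodesic orbit criterion, Proposition \ref{prop-1}(3): for each nonzero $u\in\mathfrak{n}$ there is $u'\in\mathfrak{h}$ with $\langle u,[u+u',w]_{\mathfrak{m}}\rangle^F_u=0$ for all $w\in\mathfrak{m}$. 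Restricting $w$ to $\mathfrak{n}$ keeps both $[u,w]$ and $[u',w]$ inside $\mathfrak{n}\subset\mathfrak{m}$ (because $\mathfrak{n}$ is an ideal and $\mathfrak{h}$ normalizes it), so this collapses to an identity internal to $\mathfrak{n}$,
\begin{equation}
\langle u,[u,w]\rangle^F_u+\langle u,Dw\rangle^F_u=0,\qquad\forall\,w\in\mathfrak{n},
\end{equation}
where $D=\mathrm{ad}(u')|_{\mathfrak{n}}$ is a derivation of $\mathfrak{n}$.

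From here I would run a Gordon-type argument. Taking $u$ in the last nonzero central term $\mathfrak{n}^{(s)}$ kills the first term of (2), pinning $Du$ and the commutators against the central constant-length identity (1); feeding this back through the derivation property of $D$ and the Jacobi identity on $[\mathfrak{n},\mathfrak{n}]$ should force $[\mathfrak{n},\mathfrak{n}]\subseteq\mathfrak{z}$, i.e. step-size at most $2$. To control the general step-size I would combine this with a descent: if $\mathfrak{n}$ had step-size $s\ge 3$, then quotienting by the central ideal $\mathfrak{n}^{(s)}$ and invoking Theorem \ref{go-submersion-thm} produces a geodesic orbit Finsler space whose nilradical is the image of $\mathfrak{n}$ with step-size $s-1$, so it suffices to rule out the critical case $s=3$.

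The main obstacle is genuinely Finslerian and is, I expect, exactly the point where the proof must differ from the Riemannian original. In Gordon's setting the osculating product $\langle\cdot,\cdot\rangle^F_u$ is a single $\mathrm{Ad}(H)$-invariant inner product, so the isotropy-induced derivation $D=\mathrm{ad}(u')|_{\mathfrak{n}}$ is skew-symmetric and the term $\langle u,Dw\rangle^F_u$ in (2) is handled immediately. Here $\langle\cdot,\cdot\rangle^F_u$ depends on $u$, $D$ is not skew with respect to any fixed inner product, and the naive manipulation breaks down; this mismatch is presumably the source of the gap in \cite{YD2014}. The remedy I would pursue is to never let $D$ act against a varying metric unmonitored: one applies (2) and (1) only at base vectors $u$ lying in $\mathfrak{z}$ (or in a central term), where the bracket $[u,w]$ either vanishes or lands in a direction whose length is already frozen by constant-length, so that the uncontrolled skew-symmetry is replaced by the rigid identity (1). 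Making this substitution precise, and verifying that it still closes the commutator bound, is the delicate part of the argument.
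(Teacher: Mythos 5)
Your setup (the reductive decomposition with $\mathfrak{n}\subset\mathfrak{m}$, the observation that the terms of the lower central series are ideals of $\mathfrak{g}$ lying in $\mathfrak{m}$, and the constant-length identity from Lemma \ref{GO-Abelian-ideal-in-m-KVFCL}) matches the paper, but the core step of your argument is never carried out, and the two identities you propose to combine cannot close it. The problem is your choice of abelian ideal. Applying the constant-length lemma to the center $\mathfrak{z}=\mathfrak{z}(\mathfrak{n})$ and then specializing to directions in $\mathfrak{n}$ is vacuous: since $\mathfrak{z}$ is an ideal of $\mathfrak{g}$ and $[\mathfrak{n},\mathfrak{z}]=0$ holds by the very definition of the center, your identity (1) with $w\in\mathfrak{n}$ reads $0=0$ and says nothing about $[\mathfrak{n},[\mathfrak{n},\mathfrak{n}]]$. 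You are then forced back onto your identity (2), i.e.\ Proposition \ref{prop-1}(3) with the isotropy compensation $u'$, and there the term $\langle u,Dw\rangle^F_u$ with $D=\mathrm{ad}(u')|_{\mathfrak{n}}$ is exactly where Gordon's Riemannian argument uses skew-symmetry of $D$ with respect to a fixed inner product and where no Finsler substitute exists --- this is precisely the gap in \cite{YD2014} that the paper states it is repairing. Your own text concedes the point (``should force'', ``the delicate part of the argument''): the passage from (1)--(2) to $[\mathfrak{n},\mathfrak{n}]\subseteq\mathfrak{z}$ is asserted, not proved.

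The paper's proof avoids the isotropy term entirely by a different choice of abelian ideal together with a compactness argument. Assume the step-size is $m\geq 3$ and take $\mathfrak{n}'=C^{m-1}(\mathfrak{n})$: it is an ideal of $\mathfrak{g}$ contained in $\mathfrak{m}$, it is abelian because $[C^{m-1}(\mathfrak{n}),C^{m-1}(\mathfrak{n})]\subset C^{2m-2}(\mathfrak{n})=0$ when $m\geq3$, but unlike $\mathfrak{z}$ it is \emph{not} central in $\mathfrak{n}$, since $[\mathfrak{n},\mathfrak{n}']=C^{m}(\mathfrak{n})\neq0$. Lemma \ref{GO-Abelian-ideal-in-m-KVFCL} applied to $\mathfrak{n}'$, with $g=\exp tv$ and $v\in\mathfrak{n}$, gives $\langle u,[u,v]\rangle^F_u=0$ for every $u\in\mathfrak{n}'$; because $[v,\mathfrak{n}']\subset\mathfrak{n}'$, this holds along the whole flow, so $F|_{\mathfrak{n}'}$ is invariant under the one-parameter group $\exp\bigl(t\,\mathrm{ad}(v)|_{\mathfrak{n}'}\bigr)$, which is therefore bounded in operator norm. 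But $\mathrm{ad}(v)|_{\mathfrak{n}'}$ is nilpotent, so if it were nonzero its exponential would have polynomially growing matrix entries, contradicting boundedness via the equivalence of norms on $\mathrm{End}(\mathfrak{n}')$. Hence $[\mathfrak{n},\mathfrak{n}']=C^m(\mathfrak{n})=0$, contradicting the assumption on the step-size; note that this handles all $m\geq3$ at once, so your descent through Theorem \ref{go-submersion-thm} (which also glosses over the fact that the nilradical of the quotient may be strictly larger than the image of $\mathfrak{n}$) is unnecessary. If you want to salvage your plan, replace $\mathfrak{z}$ by $C^{m-1}(\mathfrak{n})$ and replace the Gordon-type manipulation by this boundedness-of-isometries argument.
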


\begin{proof}
First write the manifold $M$ as $M=G/H$, where  $H$ is a compact subgroup of $G$. Denote $\mathfrak{h}=\mathrm{Lie}(H)$ and
$\mathfrak{n}=\mathfrak{nil}(\mathfrak{g})$.
Then by Lemma \ref{lemma-3-1},
there is a reductive decomposition $\mathfrak{g}=\mathfrak{h}+\mathfrak{m}$ such that
$\mathfrak{n}\subset\mathfrak{m}$.

Assume conversely that the step-size of  $\mathfrak{n}$ is $m>2$, i.e.,  its descending central series
$$C^1(\mathfrak{n})=\mathfrak{n} \mbox{ and }
C^k(\mathfrak{n})=
[\mathfrak{n},C^{k-1}(\mathfrak{n})]\subset C^{k-1}(\mathfrak{n}),$$
satisfies
$C^m(\mathfrak{n})\neq 0$ and $C^{m+1}(\mathfrak{n})=0$.
Then it is easy to see that  each $C^{k}(\mathfrak{n})$ is an ideal of $\mathfrak{g}$ contained in $\mathfrak{m}$.
In particular, $\mathfrak{n}'=C^{m-1}(\mathfrak{n})$ is Abelian with $\dim\mathfrak{n}'>1$. By Lemma \ref{GO-Abelian-ideal-in-m-KVFCL}, any nonzero vector $u$ in
$\mathfrak{n}'$ defines a Killing vector field of constant length on $(M,F)$, that is,
\begin{equation}\label{000}
F(\mathrm{pr}_\mathfrak{m}(\mathrm{Ad}(g)u))
\equiv\mathrm{const}
\end{equation}
for $g\in G$.
Since $\mathfrak{n}'$ is an ideal of $\mathfrak{g}$,
$\mathrm{Ad}(G)u\subset\mathfrak{n}'\subset\mathfrak{m}$.
Setting $g=\exp tv$ with $v\in\mathfrak{n}$ in  (\ref{000}), and taking the differentiation  at $t=0$, we get
\begin{equation}\label{001}
\langle u,[u,v]\rangle^F_u=0,\quad\forall u\in \mathfrak{n}',\,v\in\mathfrak{n}.
\end{equation}

Now we claim that any $v\in\mathfrak{n}$ commutes with $\mathfrak{n}'$. Assume conversely that this is not true. Then
$\mathrm{ad}(v)|_{\mathfrak{n}'}:
\mathfrak{n}'\rightarrow \mathfrak{n}'$
is a nonzero nilpotent linear map. Meanwhile,
on the linear space $\mathrm{End}(\mathfrak{n}')$, we can define two norms as the following. The first one, denoted as $||\cdot||_1$,  is the $l^2$-norm, namely,  with respect to a fixed basis of $\mathfrak{n}'$,
any $A\in\mathrm{End}(\mathfrak{n}')$ corresponds to
a matrix $(a_{ij})$, and $$||A||_1^2={\sum_{i,j}a_{ij}^2}.$$
The second one, denoted as $||\cdot||_2$,  is induced by the norm $F|_{\mathfrak{n}'}$, i.e.,  for  $A\in\mathrm{End}(\mathfrak{n}')$,
$$||A||_2=\sup\{F(Au)|u\in\mathfrak{n}',F(u)=1\}.$$
Then there exists
a basis of $\mathfrak{n}'$, such that the matrix of
$\mathrm{ad}(v)|_{\mathfrak{n}'}\in \mathrm{End}(\mathfrak{n}')$ under this basis is  a nonzero  Jordan form with zero diagonal entries. Now a direct calculation shows that
$$\lim_{t\rightarrow\infty}
||\exp (t\mathrm{ad}(v)|_{\mathfrak{n}'})||_1=+\infty.$$
On the other hand, (\ref{001}) implies that
$$||\exp(t\mathrm{ad}(v)|_{\mathfrak{n}'})||_2=1,\quad\forall t.$$
This is a contradiction,  since the norms  $||\cdot||_1$ and $||\cdot||_2$
 must be equivalent, i.e.,  there exist positive constants $c_1$ and $c_2$, such that $c_1||\cdot||_1\leq ||\cdot||_2\leq c_2||\cdot||_1$.

To summarize, we have proved that
$$C^m(\mathfrak{n})=[\mathfrak{n},\mathfrak{n}']=0,$$
which is a contradiction with the   assumption that the step-size of $\mathfrak{n}$ is $m$.
This completes the proof of the theorem.
\end{proof}

A {\it Finsler nilmanifold} is a Finsler space $(G,F)$, in which $G$ is a nilpotent Lie group and
$F$ is a left invariant Finsler metric. An immediate corollary of Theorem \ref{theorem-3-2} is
the following theorem for the step-size of a geodesic orbit Finsler nilmanifold.

\begin{theorem}\label{2-step-nil-go-thm}
Let $N$ be a nilpotent Lie group which admits a left invariant geodesic orbit Finsler metric. Then the step-size of $N$ is at most 2.
\end{theorem}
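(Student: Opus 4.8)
The plan is to deduce this as a direct consequence of Theorem \ref{theorem-3-2}. Since $F$ is left invariant, every left translation of $N$ is an isometry, so $N$ acts on itself simply transitively by isometries and $(N,F)$ is a homogeneous Finsler space. Taking $G=I_0(N,F)$, the hypothesis that $(N,F)$ is geodesic orbit says precisely that $(N,F)$ is a $G$-geodesic orbit space for this $G$. Hence Theorem \ref{theorem-3-2} applies and gives at once that the nil-radical $\mathfrak{nil}(\mathfrak{g})$ of $\mathfrak{g}=\mathrm{Lie}(G)$ has step-size at most $2$.

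It remains to transfer this bound from $\mathfrak{nil}(\mathfrak{g})$ back to the Lie algebra $\mathfrak{n}$ of $N$. Identify $N$ with the closed connected subgroup $L(N)\subset G$ of left translations, so that $\mathfrak{n}$ is realized as a nilpotent subalgebra of $\mathfrak{g}$. The crucial claim I would isolate is the inclusion $\mathfrak{n}\subset\mathfrak{nil}(\mathfrak{g})$. Granting it, the conclusion is purely formal: for any subalgebras $\mathfrak{a}\subset\mathfrak{b}$ one has $C^k(\mathfrak{a})\subset C^k(\mathfrak{b})$ for every $k$, by induction on the descending central series (using $[\mathfrak{a},C^{k-1}(\mathfrak{a})]\subset[\mathfrak{b},C^{k-1}(\mathfrak{b})]$), so from $C^3(\mathfrak{nil}(\mathfrak{g}))=0$ we get $C^3(\mathfrak{n})=0$, i.e. the step-size of $N$ is at most $2$.

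The main obstacle is therefore establishing $\mathfrak{n}\subset\mathfrak{nil}(\mathfrak{g})$, equivalently that $L(N)$ is contained in the nil-radical $\mathrm{Nil}(G)$. The natural route is to prove that $L(N)$ is a \emph{normal} subgroup of $I_0(N,F)$: a connected nilpotent normal subgroup is automatically contained in the maximal nilpotent ideal, giving the inclusion of Lie algebras. Normality in turn reduces to showing that every isometry $\phi$ fixing the identity $e\in N$ acts as a Lie group automorphism of $N$, since then $\phi\circ L_n\circ\phi^{-1}=L_{\phi(n)}$ and the isotropy normalizes $L(N)$; this is the Finsler analog of Wilson's structure theorem for the isometry group of a Riemannian nilmanifold. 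I expect this automorphism property of the isotropy to be the delicate point, as it is exactly the structural input on the isometry group that is tempting to take for granted and is presumably the source of the gap in the earlier argument of \cite{YD2014}. As an alternative to invoking Theorem \ref{theorem-3-2} as a black box, one could instead re-run its mechanism intrinsically: once $\mathfrak{n}$ is known to be an ideal of $\mathfrak{g}$, the last nonzero term $C^{m-1}(\mathfrak{n})$ of its descending central series is an abelian ideal of $\mathfrak{g}$ meeting $\mathfrak{h}$ trivially (by Lemma \ref{lemma-3-1}), so Lemma \ref{GO-Abelian-ideal-in-m-KVFCL} yields Killing fields of constant length and the norm-equivalence contradiction of Theorem \ref{theorem-3-2} can be replayed; but this too rests on first locating $\mathfrak{n}$ as an ideal, so in either formulation the one substantive task beyond Theorem \ref{theorem-3-2} is placing $\mathfrak{n}$ correctly inside the isometry algebra.
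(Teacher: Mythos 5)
Your proposal follows essentially the same route as the paper: realize $N$ inside $G=I_0(N,F)$, show $\mathfrak{n}\subset\mathfrak{nil}(\mathfrak{g})$, and invoke Theorem \ref{theorem-3-2}. The one place where you stop short is exactly the step you flag as delicate: that every isometry fixing $e$ is a Lie group automorphism of $N$, hence that $L(N)$ is normal in $I_0(N,F)$. You leave this as an expectation rather than proving or citing it, so as written the argument is incomplete at its load-bearing point. The paper does not re-prove this fact either; it disposes of it by citation, since the Finsler analog of Wilson's structure theorem is already in the literature (Deng \cite{De2011}, building on Wilson \cite{Wi1982}): for a left invariant Finsler metric on a nilpotent Lie group, $N$ is normal in the connected isometry group and $G$ is the semidirect product of $N$ and the isotropy subgroup $H$. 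So your diagnosis of where the real content lies is accurate, and the gap is fillable by a reference rather than by new work.

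Two smaller points of comparison. First, after securing normality, the paper goes further than you do: it uses Lemma \ref{lemma-3-1} ($\mathfrak{nil}(\mathfrak{g})\cap\mathfrak{h}=0$) together with the semidirect decomposition $\mathfrak{g}=\mathfrak{n}+\mathfrak{h}$ to conclude the equality $\mathfrak{n}=\mathfrak{nil}(\mathfrak{g})$, and then applies Theorem \ref{theorem-3-2}. Your observation that the inclusion $\mathfrak{n}\subset\mathfrak{nil}(\mathfrak{g})$ already suffices, by monotonicity of the descending central series under inclusion of subalgebras, is correct and slightly more economical, though the equality is the cleaner structural statement. Second, your alternative of replaying the mechanism of Theorem \ref{theorem-3-2} intrinsically (the last nonzero term $C^{m-1}(\mathfrak{n})$ is an abelian ideal of $\mathfrak{g}$ avoiding $\mathfrak{h}$, then Lemma \ref{GO-Abelian-ideal-in-m-KVFCL} and the norm-equivalence contradiction) is sound, but as you note it rests on the same normality input, so it offers no way around the Wilson-type theorem.
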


\begin{proof}
 Let $F$ be a left invariant geodesic orbit Finsler metric on the nilpotent Lie group $N$,  $G$ the connected isometry group $I_0(N,F)$, and $H$ the isotropy subgroup of $G$ at $e\in N$.
Then $H$ is a compact subgroup, $N$ is a normal subgroup of $G$, and $G$ is the semi-product of $N$ and $H$ \cite{De2011, Wi1982}. Obviously
$\mathfrak{n}=\mathrm{Lie}(N)$ is a nilpotent ideal of $\mathfrak{g}=\mathrm{Lie}(G)$, so we have $\mathfrak{n}\subset\mathfrak{nil}(\mathfrak{g})$. By Lemma
\ref{lemma-3-1}, $\mathfrak{nil}(\mathfrak{g})\cap\mathfrak{h}=0$.
Thus
$\mathfrak{n}=\mathfrak{nil}(\mathfrak{g})$. By Theorem \ref{theorem-3-2}, the step-size of $\mathfrak{n}$ is at most 2.
\end{proof}

\section{Geodesic orbit property of a Levi decomposition}
\label{section-property-levi-decomp}
Let $(M,F)$ be a $G$-geodesic orbit Finsler space, where $G$
is a closed connected subgroup of $I_0(M,F)$. Then $G$ acts on $M$ transitively and effectively,
and the  isotropy subgroup $H$ of $G$ at a fixed $x\in M$ is compact.

Let $B_\mathfrak{g}(\cdot,\cdot)$ be the Killing form of
$\mathfrak{g}$. Since $G$ acts effectively on $M=G/H$,
the restriction $B_\mathfrak{g}|_{\mathfrak{h}\times\mathfrak{h}}$ is non-degenerate. So we have a {\it $B_\mathfrak{g}$-orthogonal reductive decomposition}
$\mathfrak{g}=\mathfrak{h}+\mathfrak{m}$ such that
$\mathfrak{m}$ is the $B_\mathfrak{g}$-orthogonal complement
of $\mathfrak{h}$.

Let $\mathfrak{g}=\mathfrak{r}+\mathfrak{s}$ be the Levi decomposition of $\mathfrak{g}$, i.e., $\mathfrak{r}=\mathfrak{rad}(\mathfrak{g})$ is
the radical, and $\mathfrak{s}=\mathfrak{lev}(\mathfrak{g})$
is a Levi subalgebra of $\mathfrak{g}$. Notice that the nilradical
$\mathfrak{n}=\mathfrak{nil}(\mathfrak{g})$ is contained in
$\mathfrak{r}$. Since $B_\mathfrak{g}(\mathfrak{g},\mathfrak{h})=0$, we have
$\mathfrak{n}\subset\mathfrak{m}$ for the $B_\mathfrak{g}$-orthogonal reductive decomposition.

We further decompose the Levi subalgebra $\mathfrak{s}$ as the Lie algebra direct sum
$\mathfrak{s}=\mathfrak{s}_c\oplus\mathfrak{s}_{nc}$,
where $\mathfrak{s}_c$ is compact semi-simple and
$\mathfrak{s}_{nc}$ is non-compact semi-simple.
We will denote $\mathfrak{k}_{nc}$ a maximal compact subalgebra in $\mathfrak{s}_{nc}$,
$\mathfrak{k}'_{nc}=[\mathfrak{k}_{nc},\mathfrak{k}_{nc}]$,
and $\mathfrak{k}''_{nc}$
the maximal Abelian ideal of $\mathfrak{k}_{nc}$.

The connected subgroups generated by $\mathfrak{k}_{nc}$, $\mathfrak{k}'_{nc}$, $\mathfrak{k}''_{nc}$,
$\mathfrak{n}$, $\mathfrak{r}$, $\mathfrak{s}$,
$\mathfrak{s}_{c}$ and $\mathfrak{s}_{nc}$ are closed subgroups of $G$. They are denoted as
$K_{nc}$, $K'_{nc}$, $K''_{nc}$, $N=\mathrm{Nil}(G)$, $R=\mathrm{Rad}(G)$, $S=\mathrm{Lev}(G)$,
$S_{c}$, $S_{nc}$ respectively.

The following lemma provides a crucial observation.

\begin{lemma}\label{key-lemma-1}
Keeping  all the assumptions and notation in this section, we have
$[\mathfrak{s}_{nc},\mathfrak{r}]=0$.
\end{lemma}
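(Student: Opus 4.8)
The plan is to first establish the sharper statement $[\mathfrak{s}_{nc},\mathfrak{n}]=0$ and then deduce $[\mathfrak{s}_{nc},\mathfrak{r}]=0$ from it. I would begin with the standard structural fact that $[\mathfrak{g},\mathfrak{r}]\subseteq\mathfrak{n}=\mathfrak{nil}(\mathfrak{g})$; in particular $[\mathfrak{s}_{nc},\mathfrak{r}]\subseteq\mathfrak{n}\subseteq\mathfrak{m}$, so $\mathfrak{s}_{nc}$ already acts trivially on the quotient module $\mathfrak{r}/\mathfrak{n}$. Regarding $\mathfrak{r}$ as a finite-dimensional $\mathfrak{s}_{nc}$-module under $\mathrm{ad}$, Weyl's theorem renders it completely reducible, with $\mathfrak{n}$ a submodule. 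Hence once $\mathfrak{s}_{nc}$ is shown to act trivially on both $\mathfrak{n}$ and $\mathfrak{r}/\mathfrak{n}$, it acts trivially on all of $\mathfrak{r}$, and everything reduces to proving $[\mathfrak{s}_{nc},\mathfrak{n}]=0$.

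The engine driving the vanishing, which I would apply repeatedly, is the following principle. If $\mathfrak{a}$ is an Abelian ideal of $\mathfrak{g}$ contained in $\mathfrak{m}$ with $\mathfrak{a}\cap\mathfrak{h}=0$, then by Lemma \ref{GO-Abelian-ideal-in-m-KVFCL} every vector of $\mathfrak{a}$ has constant length, so $F(\mathrm{Ad}(\exp tX)u)=F(u)$ for all $X\in\mathfrak{s}_{nc}$, $u\in\mathfrak{a}$ and all $t$. Therefore $\exp(t\,\mathrm{ad}X|_{\mathfrak{a}})$ lies in the compact linear isometry group of $(\mathfrak{a},F|_{\mathfrak{a}})$ for every $t$, which forces $\mathrm{ad}X|_{\mathfrak{a}}$ to be semisimple with purely imaginary spectrum. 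On the other hand, fixing a Cartan decomposition $\mathfrak{s}_{nc}=\mathfrak{k}_{nc}+\mathfrak{p}_{nc}$ together with an admissible inner product on $\mathfrak{a}$, each $X\in\mathfrak{p}_{nc}$ acts symmetrically, hence with real spectrum. Real and purely imaginary spectrum combined with semisimplicity force $\mathrm{ad}X|_{\mathfrak{a}}=0$ for $X\in\mathfrak{p}_{nc}$; since $\mathfrak{p}_{nc}$ generates $\mathfrak{s}_{nc}$, I conclude $[\mathfrak{s}_{nc},\mathfrak{a}]=0$.

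Applying this principle to $\mathfrak{a}=[\mathfrak{n},\mathfrak{n}]$, which is an Abelian ideal of $\mathfrak{g}$ inside $\mathfrak{m}$ (Abelian because $\mathfrak{n}$ has step at most $2$ by Theorem \ref{theorem-3-2}, and disjoint from $\mathfrak{h}$ by Lemma \ref{lemma-3-1}), gives $[\mathfrak{s}_{nc},[\mathfrak{n},\mathfrak{n}]]=0$. To control the abelianization $\mathfrak{n}/[\mathfrak{n},\mathfrak{n}]$, I would pass to the Finslerian submersion quotient of $(G/H,F)$ by the closed normal subgroup generated by $[\mathfrak{n},\mathfrak{n}]$, using Theorem \ref{go-submersion-thm}: the quotient $(\bar G/\bar H,\bar F)$ is again geodesic orbit, the image $\bar{\mathfrak{n}}$ of $\mathfrak{n}$ becomes an Abelian ideal meeting the new isotropy trivially (Lemma \ref{lemma-3-1}), and $\mathfrak{s}_{nc}$ maps to a non-compact semisimple subalgebra $\bar{\mathfrak{s}}_{nc}$. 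The same principle applied downstairs yields $[\bar{\mathfrak{s}}_{nc},\bar{\mathfrak{n}}]=0$, which pulls back to $[\mathfrak{s}_{nc},\mathfrak{n}]\subseteq[\mathfrak{n},\mathfrak{n}]$.

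Combining the two inclusions, $\mathfrak{s}_{nc}$ acts trivially on the submodule $[\mathfrak{n},\mathfrak{n}]$ and on the quotient $\mathfrak{n}/[\mathfrak{n},\mathfrak{n}]$; complete reducibility then gives $[\mathfrak{s}_{nc},\mathfrak{n}]=0$, and hence $[\mathfrak{s}_{nc},\mathfrak{r}]=0$ as explained in the first paragraph. I expect the submersion step to be the main obstacle: one must verify that the subgroup generated by $[\mathfrak{n},\mathfrak{n}]$ is closed and normal, identify the kernel of $\mathfrak{g}\to\bar{\mathfrak{g}}$ so that its intersection with $\mathfrak{n}$ is exactly $[\mathfrak{n},\mathfrak{n}]$, and confirm that $\mathfrak{s}_{nc}$ injects onto a genuinely non-compact semisimple Levi factor of $\bar{\mathfrak{g}}$, so that the Cartan-decomposition argument remains valid in the quotient.
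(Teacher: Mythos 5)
Your proposal is correct and follows essentially the same route as the paper's proof: both establish $[\mathfrak{s}_{nc},[\mathfrak{n},\mathfrak{n}]]=0$ by combining Theorem \ref{theorem-3-2} with Lemma \ref{GO-Abelian-ideal-in-m-KVFCL} (constant-length Killing fields forcing the $\mathfrak{s}_{nc}$-action to land in the compact linear isometry algebra of a Minkowski norm), then obtain $[\mathfrak{s}_{nc},\mathfrak{n}]\subseteq[\mathfrak{n},\mathfrak{n}]$ by passing to the Finslerian submersion quotient of Theorem \ref{go-submersion-thm}, and finally use $[\mathfrak{g},\mathfrak{r}]\subseteq\mathfrak{n}$ to conclude. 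The only differences are cosmetic: where you invoke Weyl complete reducibility and a Cartan-decomposition spectral argument, the paper notes instead that $\mathrm{ad}$ would define a homomorphism from the semisimple algebra $\mathfrak{s}_{nc}$ into a compact (respectively, onto a nilpotent) Lie algebra, which must vanish.
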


\begin{proof}
We have the following chain of ideals of $\mathfrak{g}$,
$$[\mathfrak{n},\mathfrak{n}]\subset\mathfrak{n}\subset\mathfrak{r}.$$
Each of them generates a closed normal subgroup of $G$.
If we can prove
\begin{eqnarray}
& &[\mathfrak{s}_{nc},[\mathfrak{n},\mathfrak{n}]]=0,
\label{condition-1-in-key-lemma-1}\\
& &[\mathfrak{s}_{nc},\mathfrak{\mathfrak{n}}]
\subset[\mathfrak{n},\mathfrak{n}],\mbox{ and}\label{condition-2-in-key-lemma-1}\\
& &[\mathfrak{s}_{nc},\mathfrak{r}]
\subset\mathfrak{n},\label{condition-3-in-key-lemma-1}
\end{eqnarray}
then the adjoint representation $\mathrm{ad}(\cdot):\mathfrak{r}
\rightarrow\mathfrak{r}$ defines a Lie algebra endomorphism from $\mathfrak{s}_{nc}$ to
a nilpotent Lie algebra. It must be the zero endomorphism since $\mathfrak{s}_{nc}$ is semi-simple,
which proves the lemma.

 Notice that (\ref{condition-3-in-key-lemma-1}) is obvious, since  $\mathfrak{r}\cap[\mathfrak{g},\mathfrak{g}]\subset\mathfrak{n}$.

Let $\mathfrak{g}=\mathfrak{h}+\mathfrak{m}$ be the $B_\mathfrak{g}$-orthogonal reductive decompositon.
By Theorem \ref{theorem-3-2}, $[\mathfrak{n},\mathfrak{n}]$ is an Abelian ideal of $\mathfrak{g}$.
 Since $[\mathfrak{n},\mathfrak{n}]\subset\mathfrak{n}\subset\mathfrak{m}$, by Lemma \ref{GO-Abelian-ideal-in-m-KVFCL},
each nonzero vector $u\in[\mathfrak{n},\mathfrak{n}]$ defines a nonzero Killing vector field of constant length on
$(M,F)$. So we have
$$\langle u,[u,v]\rangle_u^F=0, \quad
\forall  0 \neq u\in[\mathfrak{n},\mathfrak{n}], v\in\mathfrak{s}_{nc}.$$
Assume conversely that $[\mathfrak{s}_{nc},[\mathfrak{n},\mathfrak{n}]]\neq0$.
Then the  adjoint representation defines a Lie algebra endomorphism
from $\mathfrak{s}_{nc}$ into the compact subalgebra $so([\mathfrak{n},\mathfrak{n}],
F|_{[\mathfrak{n},\mathfrak{n}]})$. This is impossible, since each simple ideal in $\mathfrak{s}_{nc}$
is of non-compact type. This proves (\ref{condition-1-in-key-lemma-1}).

Applying Theorem \ref{go-submersion-thm}, we get a submersion from $(G/H,F)$ to $(G'/H',F')=
((G/[N,N])/([N,N]H/[N,N]),F')$ where $N=\mathrm{Nil}(G)$, such that
$F'$ is a $G/[N,N]$-geodesic orbit metric. Since $\mathfrak{n}/[\mathfrak{n},\mathfrak{n}]$ is an Abelian ideal
in $\mathfrak{g}'=\mathfrak{g}/[\mathfrak{n},\mathfrak{n}]$,  its  intersection with $\mathfrak{h}'=(\mathfrak{h}+[\mathfrak{n},\mathfrak{n}])/[\mathfrak{n},\mathfrak{n}]$ is nonzero. On the other hand, we have the induced Levi decomposition $\mathfrak{g}'=\mathfrak{s}_c+\mathfrak{s}_{nc}
+(\mathfrak{r}/[\mathfrak{n},\mathfrak{n}])$. Then
we can apply
Lemma \ref{GO-Abelian-ideal-in-m-KVFCL} and a similar argument as for
(\ref{condition-1-in-key-lemma-1}) to prove that
$[\mathfrak{s}_{nc},\mathfrak{n}/[\mathfrak{n},\mathfrak{n}]]=0$, i.e.,
$[\mathfrak{s}_{nc},\mathfrak{n}]\subset[\mathfrak{n},\mathfrak{n}]$. This proves (\ref{condition-2-in-key-lemma-1}).
\end{proof}

When $M$ is simply connected, we have the following criterion for geodesic orbit Finsler nilmanifolds.

\begin{lemma}\label{key-lemma-2}
Keep  all the assumptions and notation in this section. If
$M$ is simply connected, and we have $\mathfrak{s}_{nc}=0$
and $\mathfrak{s}_c\subset\mathfrak{h}$,
then $(M,F)$ is a Finsler nilmanifold.
\end{lemma}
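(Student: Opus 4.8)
The plan is to show that under the hypotheses $\mathfrak{s}_{nc}=0$ and $\mathfrak{s}_c\subset\mathfrak{h}$, the group $G$ can be replaced by its radical $R=\mathrm{Rad}(G)$ without losing the transitivity of the action, and that the resulting presentation realizes $M$ as a nilpotent Lie group with a left-invariant geodesic orbit metric. Since $\mathfrak{s}_{nc}=0$, the Levi subalgebra is $\mathfrak{s}=\mathfrak{s}_c$, which by assumption lies entirely in $\mathfrak{h}$. Thus $\mathfrak{g}=\mathfrak{r}+\mathfrak{s}_c$ with $\mathfrak{s}_c\subset\mathfrak{h}$, so $\mathfrak{g}=\mathfrak{r}+\mathfrak{h}$ and in fact $\mathfrak{r}+\mathfrak{h}=\mathfrak{g}$. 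First I would verify that the radical $R$ already acts transitively on $M$: since $\mathfrak{g}=\mathfrak{r}+\mathfrak{h}$, the orbit $R\cdot x$ has tangent space $\mathfrak{r}+\mathfrak{h}\pmod{\mathfrak{h}}=\mathfrak{m}$ at $x$, i.e. the $R$-orbit is open, and by homogeneity and connectedness of $M$ it is all of $M$. Hence $M=R/(R\cap H)$.

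Next I would analyze the isotropy subgroup of the solvable group $R$. Writing $\mathfrak{h}_R=\mathfrak{r}\cap\mathfrak{h}=\mathrm{Lie}(R\cap H)$, the compactness of $H$ forces $R\cap H$ to be a compact subgroup of the solvable group $R$; such a subgroup is contained in a maximal compact torus, and crucially $\mathfrak{h}_R$ consists of elements whose adjoint action on $\mathfrak{g}$ is semisimple with purely imaginary eigenvalues. The key structural point is then to show that the nilradical $\mathfrak{n}$ maps onto the tangent space, i.e. that $\mathfrak{n}+\mathfrak{h}=\mathfrak{g}$, so that in fact $N=\mathrm{Nil}(G)$ acts transitively. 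For this I would use Lemma \ref{lemma-3-1}, which gives $\mathfrak{n}\cap\mathfrak{h}=0$, together with the solvability of $\mathfrak{r}$: the quotient $\mathfrak{r}/\mathfrak{n}$ is Abelian and its image in $\mathfrak{g}/\mathfrak{h}=\mathfrak{m}$ must be accounted for. Here the geodesic orbit hypothesis enters through Lemma \ref{GO-Abelian-ideal-in-m-KVFCL}: any Abelian ideal of $\mathfrak{g}$ meeting $\mathfrak{h}$ trivially gives Killing fields of constant length, and combined with the compactness of the isotropy the complementary directions in $\mathfrak{r}\setminus\mathfrak{n}$ can be absorbed into $\mathfrak{h}$ or shown to act trivially.

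The main obstacle, and the step I expect to require the most care, is passing from ``$R$ acts transitively'' to ``$N$ acts simply transitively,'' i.e. showing $M=N$ as a nilpotent Lie group. The simple-connectedness of $M$ is essential precisely here: it guarantees that $N\cap H$ is connected, and a connected compact subgroup of the simply connected nilpotent group $N$ must be trivial, so that $N$ acts simply transitively and $M\cong N$ with $F$ becoming a left-invariant metric. Establishing that $N$ is transitive (rather than merely $R$) is the crux, since one must rule out the possibility that genuinely non-nilpotent solvable directions in $\mathfrak{r}$ are needed to span $\mathfrak{m}$; this is where I would invoke the constant-length Killing field consequence of the geodesic orbit condition most heavily, arguing that any such extra abelian direction would force an isometric eigenvalue condition incompatible with solvability unless it already lies in $\mathfrak{n}+\mathfrak{h}$. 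Once transitivity of $N$ and triviality of $N\cap H$ are secured, identifying $(M,F)$ as a Finsler nilmanifold is immediate.
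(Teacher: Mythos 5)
Your proof hinges entirely on the claim that $\mathfrak{n}+\mathfrak{h}=\mathfrak{g}$, i.e.\ that the nilradical $N=\mathrm{Nil}(G)$ of the \emph{given} group $G$ acts transitively, and this claim is never proved; moreover, the tools you cite cannot prove it. Lemma \ref{lemma-3-1} only gives $\mathfrak{n}\cap\mathfrak{h}=0$, and Lemma \ref{GO-Abelian-ideal-in-m-KVFCL} applies only to \emph{abelian ideals} of $\mathfrak{g}$; the obstruction to your claim consists of elements $v\in\mathfrak{c}_\mathfrak{m}(\mathfrak{h})$ lying outside $\mathfrak{n}+\mathfrak{h}$, and such elements span an abelian subalgebra that is \emph{not} an ideal, so neither lemma touches them. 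What the geodesic orbit condition actually yields for such a $v$ (namely $\langle w,[w,v]_\mathfrak{m}\rangle^F_w=0$ for all $w\in\mathfrak{m}$, via Lemma \ref{lemma-go-centralizer}) is that $\mathrm{ad}(v)|_{\mathfrak{n}}$ is semisimple with purely imaginary eigenvalues, i.e.\ $v$ acts as a rotation on $\mathfrak{n}$ --- which is perfectly compatible with $v\notin\mathfrak{n}+\mathfrak{h}$. That your structural ingredients alone cannot rule this out is shown by the screw-motion group $\widetilde{E}(2)\subset E(3)$ acting simply transitively on flat $\mathbb{R}^3$: there $M$ is simply connected, the isotropy is compact, $\mathfrak{s}_c=\mathfrak{s}_{nc}=0$, $\mathfrak{n}\cap\mathfrak{h}=0$, yet the nilradical $\mathbb{R}^2$ is not transitive. (That example is not geodesic orbit, so it does not contradict your claim; but it shows the claim, if true, must be extracted from the GO hypothesis by a genuine argument, and your sketch --- ``absorbed into $\mathfrak{h}$ or shown to act trivially'' --- is not one; ``act trivially'' would in any case contradict effectiveness.)

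The paper's proof is designed precisely to avoid your claim, and indeed never asserts that $N$ is transitive. It shows $\mathfrak{m}\subset\mathfrak{c}_\mathfrak{m}(\mathfrak{h})+\mathfrak{n}$ (using $\mathfrak{m}\subset\mathfrak{r}$ and $[\mathfrak{h},\mathfrak{r}]\subset\mathfrak{n}$), observes that $\mathfrak{c}_\mathfrak{m}(\mathfrak{h})$ is abelian, and then invokes Lemma \ref{lemma-go-centralizer} --- the one tool you never use --- to conclude that \emph{right} translations by $H_0=\exp(\mathfrak{c}_\mathfrak{m}(\mathfrak{h}))$ are isometries, so that $F$ is $G\times H_0$-invariant. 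The nilpotent group acting transitively (and, by simple connectedness, freely) is then $N\times H_0'$ for a suitable subgroup $H_0'\subset H_0$ acting by right translations; this group sits inside $G\times H_0$ and is in general \emph{not} a subgroup of $G$ at all. This is the same mechanism behind Gordon's $\widetilde{SL}(2,\mathbb{R})$ example discussed in Section 6: the isotropy can sit ``diagonally'' so that the desired splitting cannot be found inside $G$ itself, and one must enlarge the group by right translations. Your proposal fails exactly at the step you yourself identified as the crux.
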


\begin{proof}
The radical $\mathfrak{r}$ coincides with the $B_\mathfrak{g}$-complement of $[\mathfrak{g},\mathfrak{g}]
=\mathfrak{s}_c+[\mathfrak{r},\mathfrak{g}]\subset\mathfrak{s}_c+\mathfrak{n}$. Since
$B_\mathfrak{g}(\mathfrak{n},\mathfrak{g})=0$, $\mathfrak{r}$ is also the $B_\mathfrak{g}$-complement of
$\mathfrak{s}_c$. By the assumption that $\mathfrak{s}_c\subset\mathfrak{h}$, we get $\mathfrak{n}\subset\mathfrak{m}\subset\mathfrak{r}$.

With respect to the $\mathrm{ad}(\mathfrak{h})$-action on $\mathfrak{m}$, we have the decomposition
\begin{equation}\label{101}
\mathfrak{m}=\mathfrak{c}_\mathfrak{m}(\mathfrak{h})+[\mathfrak{h},\mathfrak{m}]\subset
\mathfrak{c}_\mathfrak{m}(\mathfrak{h})+\mathfrak{n}.
\end{equation}
By Lemma \ref{lemma-go-centralizer}, the centralizer
$\mathfrak{c}_\mathfrak{m}(\mathfrak{h})$ of $\mathfrak{h}$ in $\mathfrak{m}$ is a compact subalgebra
in the solvable Lie algebra $\mathfrak{r}$, so it must be Abelian. Denote by $H_0$ the connected subgroup
generated by $\mathfrak{c}_\mathfrak{m}(\mathfrak{h})$. By Lemma \ref{lemma-go-centralizer},
we can write $G/H$ as $G'/H'=(G\times H_0)/HH_0$ in which the $H_0$-factor in the denominator is diagonally
imbedded in $G\times H_0$, and $F$ is $G\times H_0$-invariant.

By (\ref{101}), the nilpotent closed subgroup $N\times H_0$ in $G\times H_0$, where $N=\mathrm{Nil}(G)$,
acts transitively on $M=G/H$. For a generic closed subgroup $H'_0$ in $H_0$ with $\dim H'_0=\dim M-\dim N$,
$N\times H'_0$ acts transitively on $M$. Since $M$ is simply connected,
$N\times H'_0$ acts freely on $M$. The metric $F$ is $N\times H'_0$-invariant, so $(M,F)$
can be identified as the Finsler nilmanifold $(N\times H'_0,F)$.
\end{proof}

Now we further assume that $M$ is diffeomorphic to an Euclidean space. In this case  it is easily seen that
the isotropy subgroup $H$ is a maximal compact subgroup of $G$.

\begin{lemma}
Assume $(G/H,F)$ is a homogeneous Finsler space diffeomorphic to an Euclidean space, then $H$ is
a maximal compact subgroup of $G$.
\end{lemma}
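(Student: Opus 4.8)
The plan is to reduce the statement to a purely topological fact about the compact quotient $K/H$, where $K$ is a maximal compact subgroup of $G$ containing $H$. Since $G$ is connected (throughout this section $G\subset I_0(M,F)$ is a closed connected subgroup) and $H$ is compact, the Cartan--Iwasawa--Malcev structure theorem guarantees that $H$ is contained in some maximal compact subgroup $K$ of $G$, that $K$ is connected, and that $G/K$ is diffeomorphic to a Euclidean space $\mathbb{R}^d$; in particular $G/K$ is contractible. First I would record these facts.

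Next I would exploit the tower $H\subset K\subset G$. The projection $G\rightarrow G/K$ is a principal $K$-bundle, so the associated bundle $G/H=G\times_K(K/H)\rightarrow G/K$ is a genuine fiber bundle with fiber $K/H$, yielding the fibration
$$K/H\hookrightarrow G/H\longrightarrow G/K.$$
I would then feed this into the long exact sequence of homotopy groups. Because both $G/H$ (diffeomorphic to a Euclidean space by hypothesis) and $G/K$ are contractible, every $\pi_i(G/H)$ and $\pi_i(G/K)$ vanishes, and the exact sequence forces $\pi_i(K/H)=0$ for all $i\geq 1$; since $K$ is connected, $K/H$ is connected as well, so $\pi_0(K/H)$ is trivial. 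Hence $K/H$ is weakly contractible, and being a smooth manifold it is genuinely contractible. (Equivalently, a fiber bundle over the contractible base $G/K$ is trivial, so $G/H$ is homotopy equivalent to $K/H$, which is therefore contractible.)

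Finally I would invoke the fact that a closed connected manifold of positive dimension cannot be contractible, since its top mod-$2$ homology is $\mathbb{Z}_2\neq 0$ while a contractible space has vanishing reduced homology. As $K/H$ is a closed connected manifold that we have just shown to be contractible, it must be zero-dimensional, hence a single point, so $H=K$ and $H$ is a maximal compact subgroup of $G$, as claimed. The anticipated difficulties here are bookkeeping rather than conceptual: one must be careful to take $G$ connected (so that $K$, and hence $K/H$, is connected) and to verify that $G/H\rightarrow G/K$ is an honest fiber bundle so that the homotopy exact sequence applies. Both are standard, so the argument should close cleanly.
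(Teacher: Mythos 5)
Your proposal is correct and follows essentially the same route as the paper: embed $H$ in a maximal compact subgroup $K$ (with $G/K$ Euclidean, hence contractible), use the bundle $K/H\hookrightarrow G/H\rightarrow G/K$ to conclude $G/H$ is homotopy equivalent to $K/H$, and derive a contradiction from the nonvanishing top mod-$2$ homology of the closed connected manifold $K/H$, forcing $K/H$ to be a point. The detour through the homotopy long exact sequence is harmless extra packaging; your parenthetical remark and final homology step are exactly the paper's argument.
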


\begin{proof}
Assume conversely that $H$ is not maximal. Then there is  a maximal compact subgroup $K$ of $G$ such that $H\subset K$. By the
first manifold splitting theorem (Theorem 14.3.8 in \cite{HN2012}), $G/K$ is also
diffeomorphic to a Euclidean space, and $K$ is connected.
Then the assumption that $H$ is not maximal implies that $\dim K-\dim H=r>0$.
Since $G/H$ is the total space of a $K/H$-bundle over $G/K$,
$G/H$ is homotopic to $K/H$. So we have
$$H_r(G/H;\mathbb{Z}_2)=H_r(K/H;\mathbb{Z}_2)\neq 0,$$
which is a contradiction.
\end{proof}

All maximal compact subgroups of $G$ are conjugate to each other, and for any maximal
subgroup of $G$, we can find a maximal compact subgroup containing it. So with respect to the
fixed Levi decomposition, we can choose a suitable $H$, such that $S_c K'_{nc}\subset H$. Meanwhile
$HR/R$ is a compact subgroup of $S$, we may further assume $H\subset S_c K_{nc} R$, i.e., in the
Lie algebra level, we have
$\mathfrak{s}_c+\mathfrak{k}'_{nc}\subset\mathfrak{h}
\subset\mathfrak{s}_c+\mathfrak{k}_{nc}$.
Denote by $\mathfrak{h}_{nc}$ the projection of $\mathfrak{h}$ in
$\mathfrak{s}_{nc}=\mathfrak{g}/(\mathfrak{s}_c+\mathfrak{r})$. Then
we have
\begin{eqnarray*}
\mathfrak{k}'_{nc}\subset\mathfrak{h}_{nc}\subset\mathfrak{k}_{nc}, \mbox{ and }
\mathfrak{h}+\mathfrak{r}=\mathfrak{s}_c+\mathfrak{h}_{nc}+\mathfrak{r}.
\end{eqnarray*}

In the case that  $G=I_0(M,F)$, we have the following lemma.

\begin{lemma}\label{key-lemma-3}
Let $(M,F)=(G/H,F)$  be a geodesic orbit Finsler space diffeomorphic
to a Euclidean space, where  $G=I_0(M,F)$. Keep all the notation and assumptions. Then $\mathfrak{h}_{nc}=\mathfrak{k}_{nc}$.
\end{lemma}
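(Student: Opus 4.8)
The plan is to show that the non-compact semisimple part $\mathfrak{s}_{nc}$ splits off as a direct ideal factor of $\mathfrak{g}$, and then to read off $\mathfrak{h}_{nc}=\mathfrak{k}_{nc}$ from the fact that $H$ is a maximal compact subgroup. First I would combine Lemma \ref{key-lemma-1} with the Levi structure: since $[\mathfrak{s}_{nc},\mathfrak{s}_c]=0$ (these are complementary ideals of $\mathfrak{s}$), $[\mathfrak{s}_{nc},\mathfrak{r}]=0$ (Lemma \ref{key-lemma-1}), and $[\mathfrak{s}_{nc},\mathfrak{s}_{nc}]=\mathfrak{s}_{nc}$, the subalgebra $\mathfrak{s}_{nc}$ is an ideal of $\mathfrak{g}$. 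Setting $\mathfrak{g}_2=\mathfrak{s}_c+\mathfrak{r}$, which is a subalgebra annihilated by $\mathfrak{s}_{nc}$, we obtain a decomposition $\mathfrak{g}=\mathfrak{s}_{nc}\oplus\mathfrak{g}_2$ as a direct sum of ideals. Under the identification $\mathfrak{s}_{nc}=\mathfrak{g}/(\mathfrak{s}_c+\mathfrak{r})=\mathfrak{g}/\mathfrak{g}_2$, the projection $\pi:\mathfrak{g}\to\mathfrak{s}_{nc}$ is a Lie algebra homomorphism and $\mathfrak{h}_{nc}=\pi(\mathfrak{h})$ by definition.

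Next I would exploit that $H$ is a maximal compact subgroup of $G$, so $\mathfrak{h}$ is a maximal compactly embedded subalgebra of $\mathfrak{g}$. Because $\mathfrak{g}=\mathfrak{s}_{nc}\oplus\mathfrak{g}_2$ is a direct sum of ideals, the two projections $\pi(\mathfrak{h})$ and $\pi_2(\mathfrak{h})$, to $\mathfrak{s}_{nc}$ and to $\mathfrak{g}_2$ respectively, are compactly embedded subalgebras of the respective factors, and $\mathfrak{h}\subset\pi(\mathfrak{h})\oplus\pi_2(\mathfrak{h})$. The right-hand side is itself compactly embedded in $\mathfrak{g}$, so the maximality of $\mathfrak{h}$ forces $\mathfrak{h}=\pi(\mathfrak{h})\oplus\pi_2(\mathfrak{h})$; in particular $\mathfrak{h}_{nc}=\pi(\mathfrak{h})=\mathfrak{h}\cap\mathfrak{s}_{nc}$, and, again by maximality of $\mathfrak{h}$, this component is a maximal compactly embedded subalgebra of $\mathfrak{s}_{nc}$.

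Finally, since $\mathfrak{s}_{nc}$ is semisimple of non-compact type, its maximal compactly embedded subalgebras are precisely its maximal compact subalgebras, the $\mathfrak{k}$-factors of a Cartan decomposition, all conjugate to $\mathfrak{k}_{nc}$. As we have already arranged $\mathfrak{k}'_{nc}\subset\mathfrak{h}_{nc}\subset\mathfrak{k}_{nc}$ with $\mathfrak{k}_{nc}$ one such maximal compact subalgebra, the maximality of $\mathfrak{h}_{nc}$ together with the inclusion $\mathfrak{h}_{nc}\subset\mathfrak{k}_{nc}$ yields $\mathfrak{h}_{nc}=\mathfrak{k}_{nc}$, as claimed.

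I expect the main obstacle to be the second step: verifying that a maximal compactly embedded subalgebra of $\mathfrak{g}$ respects the ideal decomposition and projects onto a maximal compactly embedded subalgebra of $\mathfrak{s}_{nc}$. Care is needed here because $\mathfrak{g}_2=\mathfrak{s}_c+\mathfrak{r}$ is not semisimple and the group $G$ may be only locally a direct product $S_{nc}\times G_2$, differing by a discrete central subgroup, so it is safer to argue entirely at the Lie algebra level with compactly embedded subalgebras rather than with the groups themselves. Once this structural fact is in place, the conclusion is immediate from the classification of maximal compact subalgebras of non-compact semisimple Lie algebras.
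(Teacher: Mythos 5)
Your argument fails at the second step, and the failure is not merely a gap but a false claim: for the groups arising here, ``$H$ is a maximal compact subgroup of $G$'' does \emph{not} imply ``$\mathfrak{h}$ is a maximal compactly embedded subalgebra of $\mathfrak{g}$.'' These two notions diverge precisely because of the discrete central subgroups you flag in your final paragraph, and the divergence cannot be argued away by working ``entirely at the Lie algebra level'': a compactly embedded subalgebra strictly containing $\mathfrak{h}$ may simply fail to generate a compact subgroup, so the maximality of $H$ gives no control over it. The paper's own example in Section 6 (due to C.~Gordon) satisfies every hypothesis of the lemma and refutes your intermediate claims: $M$ is the universal covering of $SL(2,\mathbb{R})$ with a left invariant geodesic orbit Riemannian metric, $G=I_0(M,F)$ has $\mathfrak{g}\cong sl(2,\mathbb{R})\oplus\mathbb{R}$, and $\mathfrak{h}\cong\mathbb{R}$ is \emph{diagonally} embedded in $\mathfrak{k}_{nc}\oplus\mathfrak{r}=so(2)\oplus\mathbb{R}$. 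Here $H$ is a maximal compact subgroup (a circle), yet $\mathfrak{h}$ is properly contained in the compactly embedded subalgebra $so(2)\oplus\mathbb{R}$, which generates a non-compact (cylinder) subgroup of $G$; thus $\mathfrak{h}$ is not maximal compactly embedded, your forced equality $\mathfrak{h}=\pi(\mathfrak{h})\oplus\pi_2(\mathfrak{h})$ fails, and your identification $\mathfrak{h}_{nc}=\pi(\mathfrak{h})=\mathfrak{h}\cap\mathfrak{s}_{nc}$ fails as well: in this example $\mathfrak{h}\cap\mathfrak{s}_{nc}=0$ while $\mathfrak{h}_{nc}=so(2)=\mathfrak{k}_{nc}$. (Your first step --- that Lemma \ref{key-lemma-1} makes $\mathfrak{g}=\mathfrak{s}_{nc}\oplus(\mathfrak{s}_c+\mathfrak{r})$ a direct sum of ideals --- is correct, as are the purely algebraic facts about projections of compactly embedded subalgebras; the splitting of $\mathfrak{h}$ along this decomposition is exactly what cannot be deduced, and is what separates Theorem \ref{main-thm-2}, where $\mathfrak{k}''_{nc}=0$ is assumed, from the general situation of Theorem \ref{main-thm-1}.)

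A tell-tale symptom is that your proof never uses the hypothesis $G=I_0(M,F)$, only the geodesic orbit property and the maximal compactness of $H$; but that hypothesis is what carries the lemma. The paper argues by contradiction: if $\mathfrak{h}_{nc}\neq\mathfrak{k}_{nc}$, the $B_{\mathfrak{s}_{nc}}$-orthogonal complement $\mathfrak{h}_0$ of $\mathfrak{h}_{nc}$ in $\mathfrak{k}_{nc}$ is a nonzero Abelian subalgebra contained in $\mathfrak{c}_\mathfrak{m}(\mathfrak{h})$ (using Lemma \ref{key-lemma-1}), so by Lemma \ref{lemma-go-centralizer} the right $H_0$-translations are isometries of $(M,F)$. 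Since $G$ is the \emph{full} connected isometry group, each nonzero $v\in\mathfrak{h}_0$ must coincide, as a Killing field, with one coming from $\mathfrak{g}$, forcing $[u-v,\mathfrak{m}]=0$ for some $u\in\mathfrak{h}$; as $u-v$ is not central (its $\mathfrak{s}_{nc}$-component is nonzero), $[u,\mathfrak{h}]\neq 0$ then generates a nonzero ideal of $\mathfrak{g}$ contained in $\mathfrak{h}$, contradicting effectiveness. This interplay of fullness and effectiveness is the mechanism that controls the ``diagonal'' behavior of $\mathfrak{h}$ in the $\mathfrak{k}''_{nc}$-directions --- behavior which, as the Gordon example shows, genuinely occurs and which your maximality argument wrongly excludes at the outset.
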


\begin{proof}
Assume conversely that $\mathfrak{h}_{nc}\neq\mathfrak{k}_{nc}$. Then
there exists a nonzero Abelian subalgebra
$\mathfrak{h}_0$, which is
the orthogonal complement of $\mathfrak{h}_{nc}$ in $\mathfrak{k}_{nc}$
with respect to the Killing form $B_{\mathfrak{s}_{nc}}$ of
$\mathfrak{s}_{nc}$. The subalgebra $\mathfrak{h}_0$ generates a closed Abelian subgroup $H_0$ in $S_{nc}$.

By Lemma \ref{key-lemma-1}, we have $\mathfrak{h}_0\subset \mathfrak{c}_\mathfrak{m}(\mathfrak{h})$.
By Lemma \ref{lemma-go-centralizer},
we can also write $M$ as $G\times H_0/HH_0$ such that
$F$ is $G\times H_0$-invariant.

Now the assumption  $G=I_0(M,F)$ implies that, for any nonzero
$v\in\mathfrak{h}_0$, there exists a vector $u\in\mathfrak{h}$
such that $[u-v,\mathfrak{m}]=0$. Notice that $[v,\mathfrak{m}]=[v,\mathfrak{m}]_\mathfrak{m}\subset\mathfrak{m}$, Since $[\mathfrak{c}_\mathfrak{m}(\mathfrak{h}),\mathfrak{m}]
\subset\mathfrak{m}$. Obviously $u-v\notin\mathfrak{c}(\mathfrak{g})$, Since in the Levi decomposition, the $\mathfrak{s}_{nc}$-summand of $u-v$ is nonzero.
So $[u-v,\mathfrak{h}]=[u,\mathfrak{h}]\neq0$ generates
a nonzero ideal
$$[u,\mathfrak{h}]+[[u,\mathfrak{h}],\mathfrak{h}]+
[[[u,\mathfrak{h}],\mathfrak{h}],\mathfrak{h}]+\cdots$$
of $\mathfrak{g}$ contained in $\mathfrak{h}$.
This is a contradiction to the effectiveness of the $G$-action on $M$.
\end{proof}

\section{Proof of Theorem \ref{main-thm-1} and further discussion}

We keep all the notation and assumptions in the previous section.

{\bf Proof of Theorem \ref{main-thm-1}.}
Write the geodesic orbit Finsler space $(M,F)$ as $(G/H,F)$, where
$G=I_0(M,F)$. We
First construct the submersion. Applying Theorem \ref{go-submersion-thm} to
the closed normal subgroup $S_cR$ of $G$, we get a submersion
$\pi:(G/H,F)\rightarrow(G/S_cRH,F')$, where $F'$ is also a $G$-geodesic orbit
Finsler metric. If we write $G/S_cRH$ as $G'/H'$ such that $G'$ is connected
and acts effectively, then $\mathfrak{g}'=\mathrm{Lie}(G')$ can be identified with
$\mathfrak{s}_{nc}$. By Lemma \ref{key-lemma-3}, $\mathfrak{h}'=\mathrm{Lie}(H')
\subset\mathfrak{s}_{nc}$ can then be identified with $\mathfrak{k}_{nc}$, the
maximal compact subalgebra of $\mathfrak{s}_{nc}$.
The pair $(\mathfrak{s}_{nc},\mathfrak{k}_{nc})$ is a symmetric pair, so $(G'/H',F')$
is a symmetric Finsler space. Notice that $G'/H'$ must be simply connected,
Since each irreducible factor of
$(\mathfrak{s}_{nc},\mathfrak{k}_{nc})$ is of non-compact type. It is well known that any homogeneous Finsler
metric on $G'/H'$ must be symmetric, and thus Berwaldian.

Next we consider the fibers of $\pi$. Since $\pi$ defines a fiber bundle for which
both the base and the total space are diffeomorphic to Euclidean spaces, the fibers must be simply connected.

Now we prove that the fiber of $\pi$ containing $o=eH\in M$ is totally geodesic.
By the construction of the submersion in Theorem \ref{go-submersion-thm},
the fiber coincides with the orbit $R\cdot o$, and
$\dim R\cdot o=\dim R-\dim H\cap R$. By Lemma \ref{key-lemma-3},
$\dim H\cap RK''_{nc}=\dim H\cap R+\dim K''_{nc}$,
so we have
\begin{eqnarray*}
\dim R\cdot o&=&\dim R-\dim H\cap RK''_{nc}+\dim K''_{nc}\\
&=&(\dim S_c+\dim K''_{nc}+\dim R)-(\dim S_c+\dim H\cap RK''_{nc})\\
&=&\dim S_cK''_{nc}R-\dim H\cap S_cK''_{nc}R\\
&=&\dim S_cK''_{nc}R\cdot o.
\end{eqnarray*}
Since $R\cdot o\subset S_cK''_{nc}R\cdot o$, the fiber of $\pi$ containing $o$
can also be identified with the orbit $S_cK''_{nc}R\cdot o$. The group $S_cK''_{nc}R$
is the identity component of the centralizer of $K_{nc}$ in $G$, so
$S_cK''_{nc}R\cdot o$ is a connected totally geodesic submanifold.

Finally, we prove that the fiber of $\pi$ containing $o$ is a geodesic orbit nilmanifold,
with  step-size at most 2.
By Lemma \ref{totally-geodesic-technique-lemma-for-go}, the restriction of $F$ to
that fiber is an $S_cK''_{nc}R$-geodesic orbit metric. The Levi subgroup
of $S_cK''_{nc}R$ is compact, and $R\cdot o$ is simply connected,
so by Lemma \ref{key-lemma-2}, the fiber of $\pi$
containing $o$ is a nilmanifold.
By Theorem \ref{2-step-nil-go-thm}, its step-size
is at most 2.

By the homogeneity, other fibers of $\pi$ share the same properties.
This ends the proof of Theorem \ref{main-thm-1}.\
\rule{0.5em}{0.5em}

Next we discuss some special cases.

It is an interesting problem to explore when the fiber bundle description \ref{main-thm-1} can be strengthened to a product
(in the sense of homogeneous space rather than metric) of a nilmanifold and a symmetric space of non-compact type.

Notice that the product structure can not always be achieved for $(M,F)$ in Theorem \ref{main-thm-1}.
The following example was found
by C. Gordon. Let $M$ be the universal covering of $SL(2,\mathbb{R})$. Then $M$ is diffeomorphic to $\mathbb{R}^3$. It admits a left invariant geodesic orbit
Riemannian metric $F$, for which $\mathfrak{g}=\mathrm{Lie}(I_0(M,F))$ is isomorphic to
$sl(2,\mathbb{R})\oplus\mathbb{R}$ \cite{Go1996}. The Lie algebra $\mathfrak{h}=\mathbb{R}$ of the isotropy group $H$ is
diagonally imbedded in $\mathfrak{g}$. So the coset space $M$ can not be identified with
$\mathbb{R}\times (SL(2,\mathbb{R})/SO(2))$, though topologically it is.

However, the above example implies
that the trouble is caused by
the Abelian factor $\mathfrak{k}''_{nc}$ in $\mathfrak{s}_{nc}$.
When $\mathfrak{k}''_{nc}=0$, i.e.,  the symmetric space $S_{nc}/K_{nc}$ does not contain any Hermitian factor, we can prove the following

\begin{theorem}\label{main-thm-2}
Let $(M,F)$ be a geodesic orbit Finsler space diffeomorphic to
a Euclidean such that in the Levi decomposition $\mathfrak{g}=
\mathfrak{s}+\mathfrak{r}=\mathfrak{s}_c+\mathfrak{s}_{nc}+
\mathfrak{r}$ for $G=I_0(M,F)$, the maximal compact subalgebra
$\mathfrak{k}_{nc}$ in $\mathfrak{s}_{nc}$ is semi-simple, then
 $M$ can be written as $M=G/H$ such that
\begin{description}
\item{\rm (1)}
$G=G_1\times G_2$, $H=H_1\times H_2$
and $M=M_1\times M_2=G_1/H_1\times G_2/H_2$, where $G_1=S_{nc}$,
$G_2=S_cR$, $H_1=K_{nc}$ and $H_2=H\cap RS_c$.
\item{\rm (2)}
For any $x_1\in M_1$ and $x_2\in M_2$, $M_1\times x_2$ and $x_1\times M_2$
are totally geodesic. Further more, $(M_1\times x_2,F|_{M_1\times x_2})$ is a $G_1$-geodesic orbit space,
and $(x_1\times M_2,F|_{x_1\times M_2})$ is a $G_2$-geodesic orbit space.
\end{description}
\end{theorem}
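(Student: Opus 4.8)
The plan is to prove Theorem~\ref{main-thm-2} by upgrading the fiber bundle of Theorem~\ref{main-thm-1} to an honest product of coset spaces, exploiting the hypothesis $\mathfrak{k}''_{nc}=0$. First I would record the consequences of $\mathfrak{k}_{nc}$ being semisimple. By Lemma~\ref{key-lemma-3} we have $\mathfrak{h}_{nc}=\mathfrak{k}_{nc}$, and since $\mathfrak{k}''_{nc}=0$ the entire compact part $\mathfrak{k}_{nc}=\mathfrak{k}'_{nc}$ is already forced into $\mathfrak{h}$ by the choice $\mathfrak{s}_c+\mathfrak{k}'_{nc}\subset\mathfrak{h}$. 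This is the crucial structural gain: in the Gordon example the obstruction was precisely the diagonally embedded $\mathfrak{k}''_{nc}$, and killing it means no part of $\mathfrak{k}_{nc}$ sits diagonally across the two factors. I would then set $\mathfrak{g}_1=\mathfrak{s}_{nc}$, $\mathfrak{g}_2=\mathfrak{s}_c+\mathfrak{r}$, observing via Lemma~\ref{key-lemma-1} that $[\mathfrak{s}_{nc},\mathfrak{r}]=0$ and $[\mathfrak{s}_{nc},\mathfrak{s}_c]=0$ (direct sum of the Levi part), so $\mathfrak{g}=\mathfrak{g}_1\oplus\mathfrak{g}_2$ is a Lie algebra direct sum. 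Correspondingly $G=G_1\times G_2$ with $G_1=S_{nc}$ and $G_2=S_cR$, and I would verify these are closed with the claimed intersection properties.

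Next I would establish the splitting of the isotropy group. Setting $\mathfrak{h}_1=\mathfrak{k}_{nc}\subset\mathfrak{g}_1$ and $\mathfrak{h}_2=\mathfrak{h}\cap(\mathfrak{s}_c+\mathfrak{r})$, the key claim is $\mathfrak{h}=\mathfrak{h}_1\oplus\mathfrak{h}_2$. The inclusion $\supset$ is clear; for $\subset$ I would take $u\in\mathfrak{h}$, project it to $\mathfrak{s}_{nc}=\mathfrak{g}/(\mathfrak{s}_c+\mathfrak{r})$ to get an element of $\mathfrak{h}_{nc}=\mathfrak{k}_{nc}$, and because $\mathfrak{k}_{nc}=\mathfrak{k}'_{nc}\subset\mathfrak{h}$ this projection is itself already an honest element of $\mathfrak{h}\cap\mathfrak{g}_1$, so the remainder lies in $\mathfrak{h}\cap\mathfrak{g}_2=\mathfrak{h}_2$. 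This decomposes $\mathfrak{h}$ compatibly with the group product, giving $H=H_1\times H_2$ and hence $M=G/H=(G_1/H_1)\times(G_2/H_2)=M_1\times M_2$ as coset spaces. I would identify $M_1=S_{nc}/K_{nc}$ as the symmetric space of non-compact type and $M_2=S_cR/(H\cap RS_c)$ as the nilmanifold-containing factor.

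For part (2), I would prove each slice $M_1\times\{x_2\}$ and $\{x_1\}\times M_2$ is totally geodesic using Lemma~\ref{criterion for totally geodesic homogeneous subspace}: I need the spray vector field $\eta$ to preserve $\mathfrak{g}_i\cap\mathfrak{m}$. Since the decomposition $\mathfrak{g}=\mathfrak{g}_1\oplus\mathfrak{g}_2$ is $B_\mathfrak{g}$-orthogonal (distinct Levi and radical blocks), the reductive complement splits as $\mathfrak{m}=\mathfrak{m}_1\oplus\mathfrak{m}_2$ with $\mathfrak{m}_i=\mathfrak{g}_i\cap\mathfrak{m}$. Because $[\mathfrak{g}_1,\mathfrak{g}_2]=0$, for $y\in\mathfrak{m}_i$ the bracket $[v,y]_\mathfrak{m}$ lands in $\mathfrak{m}_i$ whenever $v\in\mathfrak{m}$, which via the defining relation $\langle\eta(y),v\rangle_y^F=\langle y,[v,y]_\mathfrak{m}\rangle_y^F$ forces $\eta(y)\in\mathfrak{m}_i$ (testing against $v\in\mathfrak{m}_{3-i}$ gives zero). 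This verifies (\ref{013}) for both $G_1$ and $G_2$. Finally, that each slice is itself geodesic orbit follows from Lemma~\ref{totally-geodesic-technique-lemma-for-go} applied with $L=H_2$ (respectively $L=H_1$): the fixed-point component is exactly the complementary slice, and its restricted metric is $C^0_G(L)$-geodesic orbit, where $C^0_G(H_2)\supset G_1$ and $C^0_G(H_1)\supset G_2$ by the commuting-factor structure.

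I expect the main obstacle to be the isotropy splitting $\mathfrak{h}=\mathfrak{h}_1\oplus\mathfrak{h}_2$ — specifically, making rigorous that no element of $\mathfrak{h}$ is forced to straddle both factors once $\mathfrak{k}''_{nc}=0$. One must argue carefully that the projection $\mathfrak{h}\to\mathfrak{h}_{nc}=\mathfrak{k}_{nc}$ admits a section lying inside $\mathfrak{h}$ itself, which is exactly where the semisimplicity of $\mathfrak{k}_{nc}$ (so that $\mathfrak{k}_{nc}=\mathfrak{k}'_{nc}\subset\mathfrak{h}$) is indispensable; the Gordon example shows the statement is false without it.
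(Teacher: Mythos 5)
Your overall skeleton (the splitting $\mathfrak{g}=\mathfrak{s}_{nc}\oplus(\mathfrak{s}_c+\mathfrak{r})$ via Lemma \ref{key-lemma-1}, and the isotropy splitting via $\mathfrak{k}_{nc}=\mathfrak{k}'_{nc}\subset\mathfrak{h}$ together with Lemma \ref{key-lemma-3}) is the same as the paper's, and your Lie-algebra-level arguments are fine. But there are genuine gaps. The first is in part (1): a Lie algebra direct sum $\mathfrak{g}=\mathfrak{g}_1\oplus\mathfrak{g}_2$ does not give $G=G_1\times G_2$; the connected subgroups $G_1,G_2$ commute and generate $G$, but they may intersect in a nontrivial discrete central subgroup, and ruling this out is exactly the nontrivial content of (1), which you defer as ``verify \dots the claimed intersection properties.'' The paper's proof devotes its main effort to this point: any $g\in G_1\cap G_2$ is central, hence acts on each orbit $G_1\cdot x$ --- a symmetric Finsler space of non-compact type --- as a Clifford--Wolf translation; since such spaces have non-positive flag curvature and negative Ricci scalar, they admit no nontrivial Clifford--Wolf translations \cite{DH2007}, so $g$ acts trivially and effectiveness gives $g=e$. (One could alternatively argue that semisimplicity of $\mathfrak{k}_{nc}$ makes the center of $S_{nc}$ finite, that a finite central subgroup lies in the maximal compact subgroup $H$, and that effectiveness kills it --- but some such argument must be supplied.)

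The second gap is your proof that $M_1\times x_2$ is totally geodesic. From $[\mathfrak{g}_1,\mathfrak{g}_2]=0$ and the defining relation of $\eta$ you only get $\langle\eta(y),v\rangle_y^F=0$ for all $v\in\mathfrak{m}_2$, i.e.\ $\eta(y)$ is orthogonal to $\mathfrak{m}_2$ with respect to the osculating inner product $\langle\cdot,\cdot\rangle_y^F$. This does not force $\eta(y)\in\mathfrak{m}_1$: the splitting $\mathfrak{m}=\mathfrak{m}_1\oplus\mathfrak{m}_2$ has no reason to be $\langle\cdot,\cdot\rangle_y^F$-orthogonal, since $F$ is neither Riemannian nor assumed to be a product metric. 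Tellingly, your argument never uses the geodesic orbit hypothesis, yet the conclusion fails for a general invariant metric on a product coset space. The paper closes this step by invoking Proposition \ref{prop-1}: by the GO property, $\eta(u)=[u,v]$ for some $v=v_1+v_2\in\mathfrak{h}_1\oplus\mathfrak{h}_2$, whence $\eta(u)=[u,v_1]\in\mathfrak{m}_1$ for $u\in\mathfrak{m}_1$; this simultaneously shows the restricted spray field is tangent to $\mathrm{Ad}(H_1)$-orbits, which gives the $G_1$-GO property of the slice. A related flaw: your appeal to Lemma \ref{totally-geodesic-technique-lemma-for-go} with $L=H_2$ does not work, because $\mathrm{Fix}_x(H_2)=C^0_G(H_2)\cdot x$ is in general strictly larger than $M_1\times x_2$ (for instance, if $H_2$ is trivial it is all of $M$), so the lemma does not localize to the slice you want. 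By contrast, the choice $L=H_1=K_{nc}$ is legitimate --- there $C^0_G(K_{nc})=G_2$ precisely because $\mathfrak{k}''_{nc}=0$ --- and this matches how the paper handles $x_1\times M_2$.
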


\begin{proof}
Since $\mathfrak{k}_{nc}$ is semi-simple, it generates a maximal compact subgroup $K_{nc}$
in $S_{nc}$. We can choose the suitable isotropy subgroup $H$,
which is a maximal compact subgroup of $G$, such that
$S_cK_{nc}\subset H$. On the other hand, the projection of $H$
in $G/R$ is a compact subgroup of $S$ containing $S_cK_{nc}$,
where the equality must happen since $S_cK_{nc}$ is a maximal
compact subgroup of $S$.

By Lemma \ref{key-lemma-1}, we have the Lie algebra direct sum decomposition
$$\mathfrak{g}=\mathfrak{g}_1\oplus\mathfrak{g}_2
=\mathfrak{s}_{nc}\oplus(\mathfrak{s}_c+\mathfrak{r}).$$
By the above observation, we have  a Lie algebra direct sum decomposition for $\mathfrak{h}=\mathrm{Lie}(H)$,
$$\mathfrak{h}=\mathfrak{h}_1\oplus\mathfrak{h}_2
=\mathfrak{k}_{nc}\oplus(\mathfrak{s}_c+\mathfrak{r}\cap\mathfrak{h}).
$$
The subgroups $G_1$, $G_2$, $H_1$ and $H_2$ are connected
subgroups generated by $\mathfrak{g}_1$, $\mathfrak{g}_2$,
$\mathfrak{h}_1$ and $\mathfrak{h}_2$, respectively, where
$G_1$ and $G_2$ are closed, and $H_1$ and $H_2$ are compact.

Obviously $G_1$ and $G_2$ commute with each other. To prove $G=G_1\times G_2$, we only need to prove $G_1\cap G_2=\{e\}$.
Let $g$ be any element in $G_1\cap G_2$.
Given $x\in M$,  restricted to the orbit $(G_1\cdot x,F|_{G_1\times x})$, the $g$-action defines a Clifford-Wolf translation.
The homogeneous Finsler space $(G_1\cdot x,F|_{G_1\times x})$
is a symmetric  Finsler space of non-compact type,
which has non-positive flag curvature and negative Ricci scalar, so it does not admit any non-trivial Clifford-Wolf translation
\cite{DH2007}. So $g$ acts trivially on each $G_1\times x$, i.e.,
$g$ acts trivially on $M$. By the effectiveness of the $G$-action,
we must have $g=e$. Thus $G=G_1\times G_2$, and
 $H=H_1\times H_2$. Then $M=M_1\times M_2=(G_1/H_1)\times (G_2/H_2)$ follow immediately. This proves (1) in the theorem.

By Theorem \ref{main-thm-1},
each $(x_1\times M_2,F|_{x_1\times M_2})$ is totally geodesic in
$(M,F)$.  By Lemma \ref{totally-geodesic-technique-lemma-for-go}, it is a $G_2$-geodesic orbit space itself. Now we prove this statement for
$M_1\times x_2$.

Let $\mathfrak{m}$ be the $B_\mathfrak{g}$-complement of
$\mathfrak{h}$. Then $\mathfrak{m}=\mathfrak{m}_1+\mathfrak{m}_2$,
 where $\mathfrak{m}_i$ is the $B_{\mathfrak{g}}$-complement (which is the same as the $B_{\mathfrak{g}_i}$-complement) of
$\mathfrak{h}_i$ in $\mathfrak{g}_i$ respectively. Then
we have the reductive decomposition
$\mathfrak{g}_i=\mathfrak{h}_i+\mathfrak{m}_i$, and
$\mathfrak{m}_i$ can be identified with the tangent space of
$M_i=G_i/H_i$ at $o_i=eH_i$.

By  Proposition \ref{prop-1},  for any non-zero $u\in\mathfrak{m}=\mathfrak{m}_1+\mathfrak{m}_2$,
there exists a vector $v=v_1+v_2\in\mathfrak{h}=\mathfrak{h}_1\oplus\mathfrak{h}_2$,
such that $\eta(u)=[u,v]$. In particular, if $u\in\mathfrak{m}_1$, then we have
\begin{eqnarray*}
& &\eta(u)=[u,v_1+v_2]=[u,v_1]\in\mathfrak{m}_1,\mbox{ and }\\
& &\langle\eta(u),v\rangle_u^F=\langle u,[v,u]_{\mathfrak{m}_1}\rangle_u^F,\quad\forall v\in\mathfrak{m}_1.
\end{eqnarray*}
 Now we make two observations based on the above argument. First, the restriction of the spray vector field $\eta(\cdot)$ to $\mathfrak{m}_1$ coincides with the spray
vector field of $(G_1/H_1\times o_2,F|_{G_1/H_1\times o_2})$.
Thus by Lemma \ref{criterion for totally geodesic homogeneous subspace}, the Finsler submanifold
$(G_1/H_1\times o_2,F|_{G_1/H_1\times o_2})$ is totally geodesic in $(M,F)$. Second, the spray
vector field of $(G_1/H_1,F|_{G_1/H_1\times o_2})$, which coincides with the restriction of $\eta(\cdot)$ to $\mathfrak{m}_1$, is tangent to $\mathrm{Ad}(H_1)$-orbits in $\mathfrak{m}_1$. Then
by Proposition \ref{prop-1},
$(G_1/H_1\times o_2,F|_{G_1/H_1\times o_2})$ is a $G_1$-geodesic orbit Finsler space. By the homogeneity, each $(G_1/H_1\times x_2,F|_{G_1/H_1\times x_2})$ is totally geodesic in $(M,F)$
and it is a $G_1$-geodesic orbit space itself.

This proves (2), completing the proof of the theorem.
\end{proof}

Finally, we consider the interaction between the geodesic orbit condition and the negative (non-positive) curvature condition.

In \cite{XD2017}, we stated following theorems.

\begin{theorem}\label{main-cor-1}
Any negatively curved geodesic orbit Finsler space $(M,F)$ is a
rank-one Riemannian symmetric space of non-compact type.
\end{theorem}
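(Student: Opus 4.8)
The plan is to reduce the statement to the structural description of Theorem \ref{main-thm-1} and then to extract the metric rigidity from the curvature hypothesis. First I would observe that a negatively curved Finsler space has in particular non-positive flag curvature; being homogeneous it is complete, and a negatively curved homogeneous space is diffeomorphic to a Euclidean space by the Cartan--Hadamard theorem. Hence Theorem \ref{main-thm-1} applies: $M$ is the total space of a Finslerian submersion $\pi:M\to M_1$ onto a symmetric Finsler space $M_1$ of non-compact type, whose fibers are totally geodesic geodesic orbit nilmanifolds of step-size at most $2$.

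Next I would eliminate the fibers. A totally geodesic submanifold has vanishing second fundamental form, so by the Gauss equation for Finsler submanifolds its flag curvature, for flags tangent to the submanifold, agrees with the ambient one; hence each fiber $M_2$ is itself negatively curved. Since $M_2$ is homogeneous, its flag curvature attains a negative maximum on the compact indicatrix over one point, and this bound is transported by the transitive isometry action, so $M_2$ has flag curvature bounded above by some $-c<0$ and therefore exponential volume growth. On the other hand $M_2$ is a nilmanifold, hence of polynomial growth; this forces $\dim M_2=0$. Thus every fiber is a point, $M=M_1$, and $(M,F)$ is a symmetric Finsler space of non-compact type.

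It remains to pin down the rank and to show that $F$ is Riemannian. If the rank of $M_1$ were at least $2$, a maximal abelian subspace $\mathfrak{a}\subset\mathfrak{p}$ in the Cartan decomposition $\mathfrak{s}_{nc}=\mathfrak{k}_{nc}+\mathfrak{p}$ would exponentiate to a totally geodesic flat submanifold of dimension $\ge 2$; such a submanifold carries a Minkowski, hence flat, metric and thus has zero flag curvature, contradicting negativity. Therefore $M_1$ has rank one, and its isotropy group $K_{nc}$ acts transitively on the unit sphere of $\mathfrak{p}=T_oM_1$ by two-point homogeneity of rank-one symmetric spaces. Finally, $F|_{\mathfrak{p}}$ is an $\mathrm{Ad}(K_{nc})$-invariant Minkowski norm, so its indicatrix is an $\mathrm{Ad}(K_{nc})$-invariant hypersurface meeting each ray exactly once; by transitivity it is a single round sphere of the Killing-form metric. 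Hence $F|_{\mathfrak{p}}$ is Euclidean and $F$ is Riemannian, so $(M,F)$ is a rank-one Riemannian symmetric space of non-compact type.

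The main obstacle I anticipate lies in the second step: making precise, in the Finsler rather than Riemannian category, the assertion that a positive-dimensional geodesic orbit nilmanifold cannot be negatively curved. The volume-growth comparison is the cleanest route, but it relies on promoting the pointwise negativity of the flag curvature to a uniform bound $-c<0$ using homogeneity, and on the fact that the totally geodesic fiber genuinely inherits the ambient flag curvature. An alternative, avoiding growth estimates, would be to exhibit inside $M_2$ a totally geodesic flat, for instance the orbit of a central one-parameter subgroup, which by Proposition \ref{prop-1} and Lemma \ref{totally-geodesic-technique-lemma-for-go} behaves well with respect to the geodesic orbit structure; either way this is the step requiring the most care.
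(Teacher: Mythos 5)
Your reduction via Theorem \ref{main-thm-1}, and your final rank-one and Riemannian steps, are sound; the genuine gap is in the fiber-elimination step, which cannot rule out $\dim M_2=1$. A one-dimensional Finsler manifold carries no flags at all, so for a line fiber the statement ``$M_2$ is negatively curved'' is vacuous: there is no negative maximum over the indicatrix, no curvature-forced exponential volume growth, and indeed $\mathbb{R}$ with a translation-invariant metric has linear growth, perfectly consistent with the polynomial growth of nilmanifolds. Your proposed fallback --- exhibiting a totally geodesic flat as the orbit of a central one-parameter subgroup --- fails for the same reason: a one-dimensional flat does not contradict negative flag curvature, since a flag requires a two-plane. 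So your argument, as written, only excludes fibers of dimension at least $2$ (for those it is essentially correct, granting the standard facts that totally geodesic Finsler submanifolds inherit flag curvature and that homogeneity plus compactness of the flag variety over a point upgrades pointwise negativity to a uniform bound $-c<0$; non-reversibility is harmless by homogeneity), and the line-fiber case remains open.

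The missing ingredient is exactly the one the paper uses, and it makes the detour through Theorem \ref{main-thm-1} unnecessary. If $\mathfrak{r}=\mathfrak{rad}(\mathfrak{g})\neq 0$, then the last nonzero term of the lower central series of the nilradical is a nonzero abelian ideal $\mathfrak{a}$ of $\mathfrak{g}$ lying in $\mathfrak{m}$ and meeting $\mathfrak{h}$ trivially; by Lemma \ref{GO-Abelian-ideal-in-m-KVFCL} every nonzero $v\in\mathfrak{a}$ defines a Killing vector field $V$ of constant length, and by Theorem 5.1 of \cite{XD2017-2} the \emph{ambient} flag curvature $K(x,y,\mathbf{P})$ is non-negative whenever the flagpole satisfies $y=V(x)$. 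Here the transverse edge of the flag is taken in $T_xM$, not in the orbit of $\mathfrak{a}$, so the obstruction is insensitive to the dimension of that orbit and in particular disposes of the line-fiber case. This kills $\mathfrak{r}$ outright; effectiveness then forces $\mathfrak{s}_c=0$, Lemma \ref{key-lemma-3} gives $\mathfrak{h}=\mathfrak{k}_{nc}$, and $M=S_{nc}/K_{nc}$ is symmetric of non-compact type, after which your rank argument (a rank $\geq 2$ flat is a totally geodesic Minkowski plane of zero flag curvature) and the transitivity of the rank-one isotropy representation, forcing $F$ to be Riemannian, coincide with the paper's ending. To repair your own route, replace the volume-growth comparison by this constant-length Killing field obstruction applied to the fiber directions.
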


\begin{theorem}\label{main-cor-2}
For any geodesic orbit Finsler space $(M,F)$ with non-positive flag curvature
and negative Ricci scalar, $M$ is a symmetric  space of non-compact type
and $F$ is Berwaldian.
\end{theorem}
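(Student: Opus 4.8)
The plan is to prove Theorem \ref{main-cor-2} by reducing it to the structural description already obtained in Theorem \ref{main-thm-1} and Theorem \ref{main-thm-2}, and then using the curvature hypotheses to kill the nilpotent fiber. First I would argue that a geodesic orbit Finsler space $(M,F)$ with non-positive flag curvature is necessarily diffeomorphic to a Euclidean space: non-positive flag curvature forces $(M,F)$ to have no conjugate points, and combined with completeness (automatic for a homogeneous Finsler space) a Cartan--Hadamard type theorem identifies $M$ with $\mathbb{R}^n$ via the exponential map at any point. This places us squarely in the setting of Theorem \ref{main-thm-1}, so we obtain the Finslerian submersion $\pi:M\rightarrow M_1$ onto a symmetric space $M_1$ of non-compact type, with totally geodesic fibers $M_2$ that are geodesic orbit nilmanifolds of step-size at most $2$.

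The heart of the argument is to show that the negative Ricci scalar hypothesis forces each fiber $M_2$ to be a single point. Here I would use that a totally geodesic submanifold inherits a flag curvature bound: since the fiber $M_2$ is totally geodesic in $(M,F)$, any flag curvature computed within $M_2$ agrees with the corresponding flag curvature in $M$, hence is also non-positive. But a geodesic orbit Finsler nilmanifold $(N,F)$ is necessarily a Lie group with a left-invariant metric, and such a space cannot carry a metric of strictly negative Ricci scalar while also having non-positive flag curvature unless it is trivial — the obstruction comes from the trace of the curvature operator. More directly, a non-abelian nilpotent Lie group with a left-invariant metric always admits directions of strictly positive flag curvature (this is the Finsler analogue of the classical Riemannian fact that non-abelian nilmanifolds have both positive and negative Ricci curvatures), contradicting non-positivity. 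An abelian fiber $M_2=\mathbb{R}^k$ would be flat and totally geodesic, contributing a zero eigendirection to the Ricci scalar and violating the strict negativity of the Ricci scalar on $M$. Either way $M_2$ reduces to a point, so $M=M_1$ is symmetric of non-compact type.

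Once $M$ is identified with the symmetric space $M_1$ of non-compact type, it remains to show $F$ is Berwaldian. This is where I would invoke the rigidity already noted in the proof of Theorem \ref{main-thm-1}: each irreducible factor of the symmetric pair $(\mathfrak{s}_{nc},\mathfrak{k}_{nc})$ is of non-compact type, and it is well known that any homogeneous (in particular geodesic orbit) Finsler metric on such a symmetric space must be symmetric and hence Berwaldian. I would cite this directly from the discussion following the construction of $M_1$, noting that the $\mathrm{Ad}(\mathfrak{k}_{nc})$-invariance of the Minkowski norm on $\mathfrak{s}_{nc}/\mathfrak{k}_{nc}$, together with the transitivity of the isotropy representation on the relevant structures, pins down the norm up to the Berwald condition. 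The non-positive flag curvature and negative Ricci scalar of a symmetric space of non-compact type are then immediate from the standard curvature formulas, yielding the converse direction and the full equivalence.

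The main obstacle I expect is the second step: proving cleanly that a non-trivial geodesic orbit nilmanifold fiber is incompatible with the global curvature constraints. The subtlety is that the curvature hypotheses are stated for $M$, and while total geodesy transfers the flag curvature sign to the fiber, transferring the Ricci scalar condition requires care, since the Ricci scalar of the fiber as an intrinsic submanifold need not equal the ambient Ricci scalar restricted to fiber directions. I would handle this by working with the ambient flag curvature directly — showing that if $M_2$ were non-trivial (whether abelian or $2$-step nilpotent) then $M$ would contain either a flat totally geodesic direction forcing a non-negative contribution to the ambient Ricci scalar, or a direction of strictly positive flag curvature — rather than trying to compute the intrinsic Ricci scalar of the fiber. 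Making this dichotomy precise for Finsler nilmanifolds, where the curvature tensor is direction-dependent and less symmetric than in the Riemannian case, is the delicate point of the proof.
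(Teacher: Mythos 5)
There is a genuine gap, and it sits exactly where you predicted: the step ruling out a nontrivial fiber $M_2$. Your abelian case is simply false as stated. A flat totally geodesic submanifold does \emph{not} force the ambient Ricci scalar to be non-negative in fiber directions: in $\mathbb{H}^2\times\mathbb{H}^2$ the product of a geodesic from each factor is a totally geodesic flat $\mathbb{R}^2$, yet the Ricci scalar is everywhere negative, because the Ricci scalar in a direction $y$ tangent to the flat involves the flags transverse to it, about which total geodesy of the flat says nothing. Your non-abelian case is no better off: the claimed ``Finsler analogue'' of Wolf's theorem (a non-abelian nilpotent Lie group with a left-invariant metric has directions of strictly positive flag curvature) is not established anywhere in this paper or in the cited literature; the closest result, \cite{Hu2015-2}, concerns Ricci curvature, not flag curvature, and you cannot substitute it here precisely because intrinsic Ricci data of the fiber does not transfer to the ambient space (your own last paragraph concedes this). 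So both horns of your dichotomy are unproven, and one of them is unprovable in the form you stated it. A secondary, fixable slip: non-positive flag curvature plus completeness only makes the \emph{universal cover} of $M$ diffeomorphic to $\mathbb{R}^n$ via Cartan--Hadamard; to get $M$ itself you need simple connectedness, which the paper extracts from \cite{DH2007} using \emph{both} hypotheses (non-positive flag curvature and negative Ricci scalar), not from Cartan--Hadamard alone.

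The missing idea is the paper's control of the \emph{transverse} flags, which does not come from total geodesy but from the geodesic orbit condition itself. The paper's proof (given for Theorem \ref{main-cor-1} and declared similar for Theorem \ref{main-cor-2}) does not route through Theorem \ref{main-thm-1} at all: it shows directly that $\mathfrak{r}=\mathfrak{rad}(\mathfrak{g})=0$. If $\mathfrak{r}\neq 0$, there is a nonzero abelian ideal $\mathfrak{a}\subset\mathfrak{nil}(\mathfrak{g})\subset\mathfrak{m}$, and by Lemma \ref{GO-Abelian-ideal-in-m-KVFCL} every nonzero $v\in\mathfrak{a}$ defines a Killing vector field $V$ of \emph{constant length}; Theorem 5.1 of \cite{XD2017-2} then gives $K(x,y,\mathbf{P})\geq 0$ for \emph{every} flag $\mathbf{P}$ containing $y=V(x)$ --- exactly the statement about transverse flags that your argument lacks. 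Combined with non-positive flag curvature, all such flags are flat, so the Ricci scalar vanishes in the direction $y$, contradicting strict negativity. With $\mathfrak{r}=0$, simple connectedness plus effectiveness forces $\mathfrak{s}_c=0$, and Lemma \ref{key-lemma-3} gives $\mathfrak{h}=\mathfrak{k}_{nc}$, so $M=S_{nc}/K_{nc}$ is symmetric of non-compact type and $F$, being a symmetric invariant metric, is Berwaldian. If you want to salvage your fibration-based route, you would still have to import this same constant-length Killing field argument to kill the nilradical, at which point the detour through Theorem \ref{main-thm-1} becomes unnecessary.
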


Notice that any homogeneous Finsler space with negative flag curvature, or with non-positive
flag curvature and negative Ricci scalar, is simply connected
\cite{DH2007}, and thus diffeomorphic to
an Euclidean space \cite{BCS}.

Here we give two alternative proofs for the above theorems using the techniques in the previous sections.

{\bf Proof of Theorem \ref{main-cor-1}.}
Denote $G=I_0(M,F)$, $\mathfrak{g}=\mathfrak{s}+\mathfrak{r}=
\mathfrak{s}_c+\mathfrak{s}_{nc}+\mathfrak{r}$ the Levi decomposition for
$\mathfrak{g}=\mathrm{Lie}(G)$. Write $M$ as $M=G/H$, with a $B_\mathfrak{g}$-orthogonal
reductive decomposition $\mathfrak{g}=\mathfrak{h}+\mathfrak{m}$.

 We first prove $\mathfrak{r}=0$. Assume conversely that $\mathfrak{r}\neq 0$. Then
there exists  a nonzero Abelian ideal $\mathfrak{a}$ of $\mathfrak{g}$ contained in $\mathfrak{n}=\mathfrak{nil}(\mathfrak{g})$.
So the intersection of $\mathfrak{a}\subset\mathfrak{m}$  with $\mathfrak{h}$ is nonzero.
By Lemma \ref{GO-Abelian-ideal-in-m-KVFCL}, each nonzero vector $v\in \mathfrak{a}$ defines
a nonzero Killing vector field $V$ of constant length. By Theorem 5.1 in \cite{XD2017-2}, the flag curvature $K(x,y,\mathbf{P})$
is non-negative whenever $y=V(x)$ for some $x\in M$. This is a contradiction with the negatively curved condition. Thus $\mathfrak{r}=0$.

 Recall that  any negatively curved homogeneous Finsler space must be simply connected \cite{DH2007}. Thus $M$ must be
diffeomorphic to an Euclidean space. We may choose a suitable isotropy subgroup $H$, such that
$\mathfrak{s}_c+\mathfrak{k}'_{nc}\subset\mathfrak{h}\subset\mathfrak{s}_c+\mathfrak{k}_{nc}$.
By the effectiveness of the $G$-action, we must have $\mathfrak{s}_c=0$. By Lemma \ref{key-lemma-3},
$\mathfrak{h}=\mathfrak{h}_{nc}=\mathfrak{k}_{nc}$. So $M=S_{nc}/K_{nc}$ is a symmetric
space of non-compact type. The invariant metric $F$ is symmetric, and thus
Berwaldian. So it is negatively curved if and only if $G/H$ is of rank one \cite{De2012}. In this case,
the isotropy subgroup $H$ acts transitively on the $F$-unit sphere in $T_oM$. Therefore the homogeneous Finsler
metric $F$ must be a Riemannian symmetric metric, which is uniquely determined up to a scalar.

This completes the proof of Theorem \ref{main-cor-1}.\ \rule{0.5em}{0.5em}

The proof of Theorem \ref{main-cor-2} is similar, and can be omitted.

\end{document}